\setlist[itemize]{noitemsep, topsep=0pt}
\setlist[enumerate]{noitemsep, topsep=0pt}
\setlist[itemize]{leftmargin=*}
\setlist[enumerate]{leftmargin=*}
\providecommand{\U}[1]{\protect\rule{.1in}{.1in}}
\providecommand{\norm}[1]{\left\lVert#1\right\rVert}
\providecommand{\abs}[1]{\left\lvert#1\right\rvert}
\providecommand{\pr}[1]{\left(#1\right)} 
\providecommand{\pp}[1]{\left[#1\right]} 
\providecommand{\set}[1]{\left\lbrace#1\right\rbrace} 
\providecommand{\scal}[1]{\left\langle#1\right\rangle}
\newcommand{\Lo}[2]{\mathbb{L}^{#1}\pr{{\mathcal{O}_{{#2}}}}}
\newcommand{\Ho}[1]{H_0^1\pr{\mathcal{O}_{#1}}}
\newcommand{\Hmo}[1]{H^{-1}\pr{\mathcal{O}_{#1}}}
\newcommand{\normi}[3]{\norm{#1}_{
		\ifthenelse{\equal{#2}{1}}{H_0^1\pr{\mathcal{O}_{#3}}}{%
			\ifthenelse{\equal{#2}{-1}}{H^{-1}\pr{\mathcal{O}_{#3}}}{}}}}
\newcommand{\X}{X^{t,\xi,\eta,u}}
\newcommand{\x}{x^{t,\xi,\eta,u}}
\newcommand{\xh}{\hat{x}^{t,\xi,\eta,u}}
\newcommand{\y}{y^{t,\xi,\eta,u}}
\newcommand{\yh}{\hat{y}^{t,\xi,\eta,u}}
\newcommand{\Y}{Y^{t,\xi,\eta,u}}
\newcommand{\XX}{\mathbb{X}^{t,\xi,\eta,u}}
\newcommand{\YY}{\mathbb{Y}^{t,\xi,\eta,u}}
\newcommand{\n}[1]{\mathcal{N}\pr{#1}}
\newcommand{\subjclass}[2][2020]{%
	\let\@oldtitle\@title%
	\gdef\@title{\@oldtitle\footnotetext{\textbf{#1 \emph{Mathematics subject classification.}} #2}}%
}
\newcommand{\keywords}[1]{%
	\let\@@oldtitle\@title%
	\gdef\@title{\@@oldtitle\footnotetext{\textbf{\emph{Key words.}} #1.}}%
}
\newtheorem{theorem}{Theorem}
\newtheorem{definition}[theorem]{Definition}
\newtheorem{proposition}[theorem]{Proposition}
\newtheorem{remark}[theorem]{Remark}
\newenvironment{proof}[1][Proof]{\noindent\textbf{#1.} }{\ \rule{0.5em}{0.5em}}
\title{Asymptotic issues for porous media systems
	with linear multiplicative gradient-type noise via state constrained arguments}
\author[1,2]{Ioana Ciotir}
\author[3,4]{Dan Goreac}
\author[5,6]{Ionu\c t Munteanu\footnote{Corresponding author, email: ionut.munteanu@uaic.ro}}
\affil[1]{\small Normandie University, INSA de Rouen Normandie,
	LMI,  76000 Rouen, France} 
\affil[2]{{ {{Research Center for Pure and App. Math., Graduate School of Information Sciences,
				Tohoku Univ., Japan}}}} 
\affil[3]{School of Mathematics and Statistics, Shandong University, Weihai, Weihai 264209, PR China} 
\affil[4]{LAMA, Univ Gustave Eiffel, UPEM, Univ Paris Est Creteil, CNRS, F-77447 Marne-la-Vallée, France}
\affil[5]{Faculty of Mathematics, Al. I. Cuza University, Bd. Carol I, 11, Iasi 700506, Romania}
\affil[6]{O. Mayer Institute of Mathematics, Romanian Academy, Bd. Carol I, 8, Iasi 700505, Romania}
\date{}
\begin{document}
	\maketitle
	\begin{abstract}
		{The aim of the present paper is to provide necessary and sufficient conditions  to maintain a stochastic coupled system, with porous media components and gradient-type noise in a prescribed set of constraints by using internal controls.  This work is a continuation of the results in \cite{viability},  as we consider the case of divergence type noise perturbation.  On the other hand, it provides a different framework in which the quasi-tangency condition can be obtained with optimal speed. In comparison with the aforementioned result, here we transform the stochastic system into a random deterministic one, via the rescaling approach, then we study the viability of random sets. As an application, conditions for the stabilization of the stochastic porous media equations are obtained.
		}
	\end{abstract} 
	\textbf{Keywords}: stochastic porous media equation, control system,  divergence Stratonovich noise
	
	\noindent MSC2020: Primary 93E15 60H30, 60H15 ; secondary 35R60, 75S05
	\section{Introduction}
	We focus on the following Stratonovich stochastic controlled system
	\begin{equation}\label{e1}\left\{\begin{array}{l}\begin{aligned}dX^{t,\xi,\eta,u}(s)=&\Delta \beta_1(X^{t,\xi,\eta,u}(s))ds+f_1(X^{t,\xi,\eta,u}(s),Y^{t,\xi,\eta,u}(s),u(s))ds\\&
	+B_1X^{t,\xi,\eta,u}\circ dW(s) \text{ in }(t,\infty)\times \mathcal{O}_1,\end{aligned}\\
	\begin{aligned}dY^{t,\xi,\eta,u}(s)=&\Delta \beta_2(Y^{t,\xi,\eta,u}(s))ds+f_2(X^{t,\xi,\eta,u}(s),Y^{t,\xi,\eta,u}(s),u(s))ds\\&
	+B_2Y^{t,\xi,\eta,u}\circ dW(s) \text{ in }(t,\infty)\times \mathcal{O}_2,\end{aligned}\\
	X^{t,\xi,\eta,u}(s)=0 \text{ on }\partial\mathcal{O}_1,\  Y^{t,\xi,\eta,u}(s)=0 \text{ on }\partial\mathcal{O}_2,\\
	X^{t,\xi,\eta,u}(t)=\xi, \  Y^{t,\xi,\eta,u}(t)=\eta,\end{array} \right.\ \end{equation}where, for $i\in\left\{1,2\right\}:$ $\mathcal{O}_i$ is an open, bounded domain in $\mathbb{R}^d,\ d\geq2,$ with smooth boundary $\partial\mathcal{O}_i$; $\beta_i$ and $f_i$ are nonlinear Lipschitz continuous functions (see below); $B_i\phi=b_i\cdot\nabla(-\Delta)^{-1}\phi,$ where $b_i:\overline{\mathcal{O}_i}\rightarrow \mathbb{R}^d$ is the coefficient field and $W$ denotes a one-dimensional violetian motion on a filtered probability space $\left( \Omega, \mathcal{F},\left\{\mathcal{F}_t\right\}_{t\geq0},\mathbb{P}\right)$ obeying the usual assumptions. The initial datum is chosen as $\pr{\xi,\eta}\in L^2(\Omega, \mathcal{F}_t,\mathbb{P};\Hmo{1}\times\Hmo{2}).$ These equations are controlled with a progressively measurable process $u$ taking its values in a compact metric space $U$.  The class of such control policies is denoted by $\mathcal{U}$. The existence and the uniqueness of solutions for the (coupled) system \eqref{e1}  follows in a standard way (see \cite[Chapter 2]{BDPR_2016},  \cite{brezz} or the recent work \cite{HLL2021} for more general coupled systems). For more details about systems of the same type as \eqref{e1} see \cite{viability} and the references therein.
	
	We are interested here on the following problem: under which conditions a given set $K\subset H^{-1}(\mathcal{O}_1)\times H^{-1}(\mathcal{O}_2)$ is viable with respect to equation \eqref{e1}. To be more precise, we introduce the following definition.
	\begin{definition}\label{DefNearViab}
		Given a closed set $K\subset \Hmo{1}\times \Hmo{2}$,  it is said to be \textbf{nearly viable} with respect to \eqref{e1} on the finite interval $\pp{0,T>0}$ if, for every initial time $t\in\left[0,T\right)$ and every initial pair $\pr{\xi,\eta}\in \mathbb{K}_t:=\mathbb{L}^2\pr{\Omega,\mathcal{F}_t,\mathbb{P};K}$, we have \begin{equation}
		\label{EqNearViab'}
		\inf_{u\in \mathcal{U}}\sup_{s\in\pp{t,T}}d\pr{\pr{\X(s),\Y(s)},\mathbb{K}_s}=0.
		\end{equation}
	\end{definition} The {{distance}} is meant in $\mathbb{L}^2\pr{\Omega,\mathcal{F},\mathbb{P};\Hmo{1}\times\Hmo{2}}$.  If an optimal control in \eqref{EqNearViab'} exists, then near viability is equivalent with viability i.e.  $(\X(s),\Y(s))\in K$,  $\mathbb{P}$-a.s. and for all $s\in\pp{t,T}$.  A notion of \textbf{$\mathbb{L}^2$-near viability} can be introduced when $\pr{\xi,\eta}\in \Lo{2}{1}\times\Lo{2}{2}$, $\mathbb{P}$-a.s., by replacing $\mathbb{K}_s$ with $\mathbb{K}_s \cap\mathbb{L}^2\pr{\Omega,\mathcal{F}_s,\mathbb{P};\Lo{2}{1}\times\Lo{2}{2}}$. 
	
	The literature on state-constrained systems is vast. It originates from the work \cite{26}, then it has been extended to tangency concepts to stochastic finite-dimensional systems by \cite{1}. See also the pioneering works \cite{2,5} and the monograph \cite{4}. The results in \cite{16} offer an important semi-group-based method dealing with a wide class of deterministic PDEs.  {Let us also mention the application of such methods to purely deterministic porous media equations in \cite{vrabie}.  Their setting, however is fundamentally different than ours.}\\
	As already mentioned, we shall follow, in the stochastic case, some of the ideas in \cite{viability}.  We emphasize that, in \cite{viability}, the authors study the same problem for a porous media-type system (as \eqref{e1}) but with \emph{nonlinear and Lipschitz} stochastic perturbations.  { The fundamental idea is to replace the solution with a constrained Euler-like scheme and provide good estimates of the distance between the original solution and its approximating scheme. While the best local in time estimate for deterministic systems may be expected to be of order $t$,  \cite[Sections 3.2, 3.3 ]{viability} only exhibit speeds of order $t^{1-\frac{\lambda}{2}}$ for every $\lambda>0$. One way of obtaining the estimate for $\lambda=0$ for the "deterministic" part (i.e. conditional expectations) is to project the system on convenient finite-dimensional spaces, cf.  \cite[Section 3.4]{viability}. We provide here another way of overcoming these technical difficulties and obtain optimal speed when the system can be transformed into a pseudo-deterministic one.}\\
	Partly based on the techniques in \cite{viability}, we provide necessary and sufficient conditions for a set to be viable with respect to \eqref{e1}, in the case of Stratonovich $H^{-1}$-divergence type noise. The choice of a Stratonovich stochastic integral is motivated both by the physical interpretation of the equation {and by the particular affinity to exponential transformations}. One consequence is the fact that the Stratonovich integral is stable with respect to changes in the random term. See \cite{TW} for more details. 
	
	{{The rescaling method which is used in this work was first introduced for stochastic porous media equations in a pioneer paper of Barbu-Röckner (see \cite{first}). 
			Recently,  this technique was intensively used for different stochastic partial differential equations under different sets of assumptions.  See e.g.  \cite{brezz}, \cite{BRflow}, \cite{BRfokker}, \cite{ben1}}}.
	
	{{In the present work}},  via the rescaling argument, we rewrite \eqref{e1} as an equivalent random deterministic system, then deduce the needed energy estimates to conclude with the tangency conditions for viability of the solution. {To avoid redundancy, we will only present the proofs that elude a direct application or minor changes for the arguments in \cite{viability}.} As an application of the theoretical results,  at the end of the paper, we will provide some necessary and sufficient conditions for the exponential asymptotic stabilization of the aforementioned stochastic porous media equation. For other results on long time behaviour of the solution to stochastic porous media equation, see \cite{gess}.
	
	\section{Notations and assumptions}

	For $i\in\set{1,2}$:  $\Lo{p}{i}$, $p\geq 1$ is the standard Banach space of real-valued $p$-power (Lebesgue-) integrable functions on $\mathcal{O}_i$. We set $\|\cdot\|_{\mathbb{L}^p(\mathcal{O}_i) }$ for the classical Lebesgue norm, and set  $\left<\cdot,\cdot\right>_{\Lo{2}{i}}$ for the classical scalar product.  
	$\Ho{i}$ is the space of $\mathbb{L}^2\pr{\mathcal{O}_i}$ functions that vanish on $\partial \mathcal{O}_i$ and such that the distributional derivative of first order belongs to $\mathbb{L}^p\pr{\mathcal{O}_i}$. The norm $\normi{\cdot}{1}{i}$ is given by $\normi{\phi}{1}{i}^2:=\|(-\Delta)^\frac{1}{2}\phi\|^2_{\Lo{2}{i}}.$
	The dual, with pivot space $\mathbb{L}^2(\mathcal{O}_i)$, of the aforementioned space is {{assumed to be}} $H^{-1}\pr{\mathcal{O}_i}$. The associated norm is $\normi{\cdot}{-1}{i}
	$ and the associated product $\scal{\phi,\psi}_{\Hmo{i}}:=\left<(-\Delta)^{-1}\phi,\psi\right>_{\mathbb{L}^2(\mathcal{O}_i)}.$

	Let us introduce $\mathcal{B}_i$, the set of all functions $b=(b^1,b^2,...,b^d),\ b^j:\mathbb{R}^d\rightarrow \mathbb{R},\ j=1,2,...,d,$, such that $b^j\in C^2(\overline{\mathcal{O}_i})$,\ $\text{div} \ b=0$ and $b$ is tangent to the boundary $\partial\mathcal{O}_i$, of the domain $\mathcal{O}_i$. Let any $b\in\mathcal{B}_i$. We associate  the operator $B:\mathbb{L}^2(\mathcal{O}_i)\rightarrow H^{-1}(\mathcal{O}_i)$, defined as
	\begin{equation}\label{e2}B\phi:=b\cdot\nabla (-\Delta)^{-1}\phi, \ \phi\in L^2(\mathcal{O}_i).\end{equation}  
	It is proven in \cite[Section 4.3]{brezz} that {{$B$ }} is densely defined and skew-symmetric in $\Hmo{i}$ {such that $B\in\mathcal{L}(\Hmo{i};\mathbb{L}^2(\mathcal{O}_i))$.} Furthermore, $B$ is the infinitesimal generator of a contraction $C_0-$group in $\Hmo{i}$, denoted by $\left\{e^{sB},\ s\in\mathbb{R}\right\}$. This group leaves $\mathbb{L}^2(\mathcal{O}_i)$ invariant.
	
	In this paper, we assume that $b_i\in \mathcal{B}_i$ for $i\in\set{1,2}$.  For $\phi,\psi \in H^{-1}(\mathcal{O}_i),\ s\in\mathbb{R},$ we have
	\begin{equation}\label{wq5}\left<e^{sB_i}\phi,\psi\right>_{H^{-1}(\mathcal{O}_i)}=\left<\phi,e^{-sB_i}\psi\right>_{H^{-1}(\mathcal{O}_i)}.\end{equation} The contraction property reads
	\begin{equation}\label{wq3}\|e^{sB_i}\phi\|_{\Hmo{i}}\leq \|\phi\|_{\Hmo{i}}.\end{equation}

	Concerning the nonlinearities in \eqref{e1}, we will work under the assumption
	\begin{align}\label{Ass1}\begin{cases}
	(i)\ & \beta_i \text{ is real-valued, Lipschitz continuous of Lipschitz constant }[\beta_i]_i>0;\\
	(ii)\ &\textnormal{There exist } \alpha_i> 0\textnormal{ s.t. }\pr{\beta_i(r)-\beta_i(s)}\pr{r-s}\geq \alpha_i (r-s)^2,\ \forall r,s\in\mathbb{R};\\
	(iii)\ &f_i:\Hmo{1}\times \Hmo{2}\times U\rightarrow \Hmo{i},\\ &\textnormal{There exist } [f_i]_1>0 \textnormal{ such that }\forall x,x'\in\Hmo{1},\ y,y'\in \Hmo{2},\ u\in \mathcal{U},\\ 
	&\norm{f_i(x,y,u)-f_i(x',y',u)}_{\Hmo{i}}\leq \pp{f_i}_{1}\pr{\norm{x-x'}_{\Hmo{1}}+\norm{y-y'}_{\Hmo{2}}};
	\\
	&\underset{u\in \mathcal{U}}{\sup}\norm{f_i(0,0,u)}_{\Hmo{i}}<\infty;\\
	(iv)\ & {\textnormal{ the restriction }f_i:\Lo{2}{1}\times \Lo{2}{2}\times U\rightarrow \Lo{2}{i}\textnormal{ enjoys the same properties}.}
	\end{cases}
	\end{align}
	
	\section{The equivalent formulation and the main result}

	Let us introduce  $\Gamma_i, \ i=1,2,$  the stochastic exponential $\Gamma_i(t,s)=e^{[W(s)-W(t)]B_i},\ i=1,2.$ 	Following the ideas in \cite{brezz}, by the rescaling $X^{t,\xi,\eta,u}(s)=\Gamma_1(t,s)x^{t,\xi,\eta,u}(s)$ and $Y^{t,\xi,\eta,u}(s)=\Gamma_2(t,s)y^{t,\xi,\eta,u}(s),$ we equivalently rewrite  \eqref{e1} as a random-deterministic system
	\begin{equation}\label{e3}\left\{\begin{array}{l}\begin{aligned}\partial_s x^{t,\xi,\eta,u}(s)=&\Gamma_1(s,t)\Delta \beta_1(\Gamma_1(t,s)x^{t,\xi,\eta,u}(s))\\&+\Gamma_1(s,t)f_1(\Gamma_1(t,s)x^{t,\xi,\eta,u}(s),\Gamma_2(t,s)y^{t,\xi,\eta,u}(s),u(s)) \text{ in }(t,\infty)\times \mathcal{O}_1,\end{aligned}\\
	\begin{aligned}\partial_sy^{t,\xi,\eta,u}(s)=&\Gamma_2(s,t)\Delta \beta_2(\Gamma_2(t,s)y^{t,\xi,\eta,u}(s))\\&
	+\Gamma_2(s,t)f_2(\Gamma_1(t,s)x^{t,\xi,\eta,u}(s),\Gamma_2(t,s)y^{t,\xi,\eta,u}(s),u(s))\text{ in }(t,\infty)\times \mathcal{O}_2,\end{aligned}\\
	x^{t,\xi,\eta,u}(s)=0 \text{ on }\partial\mathcal{O}_1,\  y^{t,\xi,\eta,u}(s)=0 \text{ on }\partial\mathcal{O}_2,\\
	x^{t,\xi,\eta,u}(t)=\xi, \  y^{t,\xi,\eta,u}(t)=\eta.\end{array} \right.\end{equation}

	One can easily adapt Definition  \ref{DefNearViab} in order to precise what a viable set  to system \eqref{e3} means. Then, 
	owing to \eqref{wq5}-\eqref{wq3},	it is enough to study the viability of equation \eqref{e3} only, because
	$$
	\inf_{u\in \mathcal{U}}\sup_{s\in\pp{t,T}}d\pr{\pr{\X(s),\Y(s)},\mathbb{K}_s}\leq \inf_{u\in \mathcal{U}}\sup_{s\in\pp{t,T}}d\pr{\pr{x^{t,\xi,\eta,u}(s),y^{t,\xi,\eta,u}(s)},\hat{\mathbb{K}}_{t,s}}
	$$
	where $\hat{\mathbb{K}}_{t,s}:=\left(\begin{array}{cc}\Gamma_1(s,t)&0\\ 0 &\Gamma_2(s,t)\end{array}\right)\mathbb{K}_s$. Here,  $d$ is the same distance as in Definition \ref{DefNearViab}. {Since we are interested in norm considerations to evaluate an asymptotic behaviour,  and in order to avoid moving sets, we assume that $\hat{\mathbb{K}}_{t,s}=\mathbb{K}_s$ is no-longer time dependent (except for unavoidable adaptedness issues). { But, since we deal with a random-deterministic equation, and due to the particular structure of $\Gamma_i$ depending only on the increment of the Brownian, adaptedness issues can be abandoned altogether. We will simply write $\mathbb{K}$ instead of $\mathbb{K}_T$.}
		
		Before moving on, let us recall that  {$B_i\in \mathcal{L}(H^{-1},\mathbb{L}^2)$},  {such that \[\norm{e^{sB_i}-\mathbb{I}}_{\mathcal{L}\pr{\Hmo{i};\Lo{2}{i}}}\leq e^{\abs{s}\norm{B_i}_{\mathcal{L}\pr{\Hmo{i};\Lo{2}{i}}}}-1. \] 
			{
				One recalls that, for the folded normal distribution, one has \[\int_{\mathbb{R}}\pr{e^{\delta\abs{r}}-1}\mathbb{P}_{\mathcal{N}(0,1)}(dr)=2e^{\frac{\delta^2}{2}}\int_{-\infty}^\delta\frac{1}{\sqrt{2\pi}}e^{-\frac{l^2}{2}}dl-1:=\omega(\delta). \]
				As a consequence, whenever $\zeta\in\mathbb{L}^2\pr{\Omega,\mathcal{F}_t,\mathbb{P};\Hmo{i}}$, and $s\geq t$, by using the independence of $W(s)-W(t)$ of $\mathcal{F}_t$ and the stationarity of the increments, one has
				\begin{equation}\label{bun''}\begin{split}
				&\mathbb{E}\pp{\norm{\Gamma_i(t,s)\zeta-\zeta}_{\Lo{2}{i}}^2}\leq \int_{\mathbb{R}}\pr{e^{\abs{r}\norm{B_i}_{\mathcal{L}\pr{\Hmo{i};\Lo{2}{i}}}}-1}^2\mathbb{P}_{W(s-t)}(dr)\mathbb{E}\pp{\norm{\zeta}^2_{\Hmo{i}}}\\
				\leq &\int_{\mathbb{R}}\pr{e^{2\abs{r}\norm{B_i}_{\mathcal{L}\pr{\Hmo{i};\Lo{2}{i}}}}-1-2\pr{e^{\abs{r}\norm{B_i}_{\mathcal{L}\pr{\Hmo{i};\Lo{2}{i}}}}-1}}\mathbb{P}_{W(s-t)}(dr)\mathbb{E}\pp{\norm{\zeta}^2_{\Hmo{i}}}\\
				=&\mathbb{E}\pp{\norm{\zeta}^2_{\Hmo{i}}}\pp{\omega(2\delta)-2\omega(\delta)}\mid_{\delta=\sqrt{s-t}\norm{B_i}_{\mathcal{L}\pr{\Hmo{1};\Lo{2}{1}}}}.
				\end{split}\end{equation}
				The reader is invited to note that $\frac{\omega(2\delta)-2\omega(\delta)}{\delta^2}$ is bounded (arround $0+$). Indeed, \begin{align*}\omega(\delta)=O(\delta^2)+2\int_{-\infty}^\delta \frac{1}{\sqrt{2\pi}}e^{-\frac{l^2}{2}}dl-2\int_{-\infty}^0 \frac{1}{\sqrt{2\pi}}e^{-\frac{l^2}{2}}dl\\
				=O\pr{\delta^2}+\frac{2}{\sqrt{2\pi}}\int_0^\delta \pr{1-\frac{l^2}{2}}dl=O\pr{\delta^2}+\frac{2\delta}{\sqrt{2\pi}}.\end{align*}As a consequence,  $\omega(2\delta)-2\omega(\delta)=O(\delta^2)$ (with the obvious use of Landau notation), and we get the existence of a generic universal constant $C>0$ (independent of time and the initial datum $\zeta$) for which
				\begin{equation}\label{bun'}
				\mathbb{E}\pp{\norm{\Gamma_i(t,s)\zeta-\zeta}_{\Lo{2}{i}}^2}\leq C\mathbb{E}\pp{\norm{\zeta}^2_{\Hmo{i}}}(s-t),\ \forall s\geq t \textnormal{ small enough}.
				\end{equation}}
			Let us consider the fundamental solution (acting as an Euler-type scheme) associated to \eqref{e3}
			\begin{equation}\label{wq8}\left\{\begin{array}{l}\begin{aligned}\partial_s \hat{x}^{t,\xi,\eta,u}(s)=&\Gamma_1(s,t)\Delta \beta_1(\Gamma_1(t,s)\hat{x}^{t,\xi,\eta,u}(s))+f_1(\xi,\eta,u(s)),\end{aligned}\\
			\begin{aligned}\partial_s\hat{y}^{t,\xi,\eta,u}(s)=&\Gamma_2(s,t)\Delta \beta_2(\Gamma_2(t,s)\hat{y}^{t,\xi,\eta,u}(s))
			+f_2(\xi,\eta,u(s)),\end{aligned}\\
			\hat{x}^{t,\xi,\eta,u}(s)=0 \text{ on }\partial\mathcal{O}_1,\ \hat{ y}^{t,\xi,\eta,u}(s)=0 \text{ on }\partial\mathcal{O}_2,\ s\geq t,\\
			\hat{x}^{t,\xi,\eta,u}(t)=\xi\in\mathbb{L}^2\pr{\Omega,\mathcal{F}_t,\mathbb{P};\Hmo{1}}, \  \hat{y}^{t,\xi,\eta,u}(t)=\eta\mathbb{L}^2\pr{\Omega,\mathcal{F}_t,\mathbb{P};\Hmo{2}}.\end{array} \right.\end{equation} 
			
			In order to state the main result of this paper, let us introduce below the definition of quasi-tangent sets
			\begin{definition}\label{DefLambdaQT}
				Let us fix the finite interval $\pp{0,T}$ and $t\in\left[0,T\right)$.  A closed set $K\subset \Hmo{1}\times \Hmo{2}$ satisfies the \textbf{quasi-tangency condition} with respect to the control system \eqref{e3} at $\pr{\xi,\eta}\in \mathbb{K}_t$ if \begin{align*}
				\underset{\varepsilon\rightarrow0+}{\lim\inf}\inf\big\lbrace&\frac{1}{\varepsilon^2}\mathbb{E}\pp{\norm{\hat{x}^{t,\xi,\eta,u}(t+\varepsilon)-\theta_1}_{\Hmo{1}}^2+\norm{\hat{y}^{t,\xi,\eta,u}(t+\varepsilon)-\theta_2}_{\Hmo{2}}^2
				}:\\ &\pr{\theta_1,\theta_2}\in \mathbb{K}_{t+\varepsilon}\big\rbrace=0,\ \mathbb{P}-\text{a.s.}.
				\end{align*} 
				If this condition holds for every $t\in\left[0,T\right)$ and every $\pr{\xi,\eta}\in \Hmo{1}\times \Hmo{2}$, then we will simply say that $K$ satisfies the quasi-tangency condition. {This corresponds to \cite[Definition 4]{viability} with $\lambda=0$.}
				
			\end{definition}
			The main theoretical result is the following.
			\begin{theorem}\label{Tq1}
				Under the above assumptions, the closed set $K$ is near viable with respect to \eqref{e3} if and only if $K$ is quasi-tangent for every $t\in[0,T)$ and every initial data $\xi,\eta$ belonging to the space $\mathbb{K}_t$.
			\end{theorem}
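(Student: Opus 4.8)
The plan is to prove the two implications of the equivalence separately, the rescaled pseudo-deterministic system \eqref{e3} being the device that produces the sharp ($\lambda=0$) tangency speed. For the \emph{necessity} part (near viability $\Rightarrow$ quasi-tangency) I would fix $t\in[0,T)$ and $\pr{\xi,\eta}\in\mathbb{K}_t$ and, for each $n$, use Definition \ref{DefNearViab} to select $u_n\in\mathcal{U}$ with $\sup_{s\in\pp{t,T}}d\pr{\pr{\x(s),\y(s)},\mathbb{K}_s}\leq \frac1n$ along the trajectory of \eqref{e3}. The heart of the matter is to compare this genuine trajectory with the Euler scheme \eqref{wq8} driven by the same control: setting $p:=\x-\xh$ and $q:=\y-\yh$ and testing the difference system in $\Hmo{1}\times\Hmo{2}$, the adjoint identity \eqref{wq5} turns the porous-media contributions into $-\alpha_i\norm{\Gamma_i(t,s)\x-\Gamma_i(t,s)\xh}_{\Lo{2}{i}}^2\leq 0$ via the monotonicity \eqref{Ass1}(ii), leaving $\frac{d}{ds}\norm{p}_{\Hmo{1}}\leq\norm{\Gamma_1(s,t)f_1\pr{\Gamma_1(t,s)\x,\Gamma_2(t,s)\y,u}-f_1\pr{\xi,\eta,u}}_{\Hmo{1}}$ and similarly for $q$. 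Each forcing discrepancy splits into $\pr{\Gamma_i(s,t)-\mathbb{I}}f_i$, bounded through the (pathwise) increment estimate behind \eqref{bun'}, and a Lipschitz part $\pp{f_i}_1\pr{\norm{\Gamma_1(t,s)\x-\xi}_{\Hmo{1}}+\norm{\Gamma_2(t,s)\y-\eta}_{\Hmo{2}}}$, bounded via \eqref{wq3}, \eqref{bun'} and the $\mathbb{L}^2\pr{t,T;\Hmo{i}}$-integrability of the time derivatives. As every such term vanishes at $s=t$ and is of size $\sqrt{s-t}$ in $\mathbb{L}^2(\Omega;\Hmo{i})$, a Cauchy--Schwarz bound on $\int_t^{t+\varepsilon}$ gives $\E{\norm{p(t+\varepsilon)}_{\Hmo{1}}^2+\norm{q(t+\varepsilon)}_{\Hmo{2}}^2}=O(\varepsilon^3)$. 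Choosing $\pr{\theta_1,\theta_2}\in\mathbb{K}_{t+\varepsilon}$ that realizes the distance $\tfrac1n$ and using the triangle inequality, $\frac1{\varepsilon^2}\E{\norm{\xh(t+\varepsilon)-\theta_1}_{\Hmo{1}}^2+\norm{\yh(t+\varepsilon)-\theta_2}_{\Hmo{2}}^2}\leq\frac{2}{\varepsilon^2}\pr{O(\varepsilon^3)+\tfrac1{n^2}}$, so taking $n=n(\varepsilon)$ with $\tfrac1{n(\varepsilon)}=o(\varepsilon)$ sends the $\liminf$ in Definition \ref{DefLambdaQT} to $0$.

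For \emph{sufficiency} (quasi-tangency $\Rightarrow$ near viability), which I expect to be the main obstacle, I would fix $\varepsilon>0$ and construct an $\varepsilon$-approximate viable trajectory by a greedy Euler procedure. Starting from $\pr{\xi,\eta}$, Definition \ref{DefLambdaQT} furnishes at each node a control and a step $h>0$ such that the scheme \eqref{wq8} lands within distance $\varepsilon h$ of $\mathbb{K}$ in $\mathbb{L}^2\pr{\Omega;\Hmo{1}\times\Hmo{2}}$; one resets at a point of $\mathbb{K}$ within that distance and repeats. A Brezis--Browder / Zorn maximality argument, as in \cite{viability}, shows the construction can be carried up to the horizon $T$ without the mesh collapsing. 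The quantitative input, and the precise place where the rescaling is decisive, is the per-step stability estimate: comparing the concatenated scheme with the solution of \eqref{e3} under the concatenated control and summing, the monotonicity \eqref{Ass1}(ii) together with the contraction \eqref{wq3} and the increment bound \eqref{bun'} yield an error accumulating only to order $\varepsilon$, uniformly on $\pp{t,T}$. This is the optimal $\lambda=0$ rate, improving on the $t^{1-\frac\lambda2}$ speeds reached by the direct stochastic estimates of \cite{viability}.

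Finally, letting $\varepsilon\to0$, I would pass from the family of approximate trajectories and their (concatenated) controls to a limit solving \eqref{e3} for some admissible $u\in\mathcal{U}$ and satisfying $\sup_{s\in\pp{t,T}}d\pr{\pr{\x(s),\y(s)},\mathbb{K}_s}=0$, which is exactly \eqref{EqNearViab'}. I expect this limit passage to be the hard part: one must ensure that the controls (or at least the associated trajectories) converge and that the uniform closeness to $\mathbb{K}$ is preserved in the limit. Here the Lipschitz structure \eqref{Ass1}(iii)--(iv) of the $f_i$, the dissipativity of the porous-media operators, and the fact that $\Gamma_i$ depends only on the Brownian increments --- so that, as noted after \eqref{bun'}, adaptedness raises no difficulty --- are the features that make the compactness and stability arguments close, the remaining bookkeeping being the one already carried out in \cite{viability}.
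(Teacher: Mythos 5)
Your proposal follows essentially the same route as the paper: necessity by comparing the rescaled trajectory with the Euler-type scheme \eqref{wq8} (the paper's Propositions \ref{prop1} and \ref{prop2}, which yield a squared error of order $(s-t)^{9/4}\gg \varepsilon^2$ — slightly weaker than your claimed $O(\varepsilon^3)$ but equally sufficient for the quasi-tangency quotient), and sufficiency by the Br\'ezis--Browder construction of constrained $\varepsilon$-approximate solutions adapted from \cite{viability} in Appendix \ref{A1}. The only cosmetic difference is that you handle the ``near'' in near viability through a diagonal choice $n=n(\varepsilon)$, whereas the paper simply takes a control keeping the trajectory in the constraint set.
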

			{
				\begin{proof}The sufficiency follows immediately by invoking the results in \cite[Theorem 9 and Theorem 18]{viability}, forwardly adapted to the random-deterministic case.  {For our readers' sake, we provide the adaptation of the proof to this framework in the Appendix \ref{A1}.} For the necessity, which constitutes the whole contribution of this paper, we invite the reader to take a look at Proposition \ref{prop1} and Proposition \ref{prop2} below.
				\end{proof}

				First, we will need some estimates on the right-continuity of schemes which are sharper than the ones obtained in \cite[Proposition 5]{viability}.
				\begin{proposition}\label{prop1}Let the time horizon $T>0$ be fixed.  Then, there exists a constant $C>0$ (generic, dependent of $T$), but not on the initial data such that, for every $\xi\in\mathbb{L}^2\pr{\Omega,\mathcal{F}_t,\mathbb{P};\Hmo{1}}$ and every $\eta\in\mathbb{L}^2\pr{\Omega,\mathcal{F}_t,\mathbb{P};\Hmo{2}}$, and every $t\leq s\leq T$, we have
					\begin{equation}\label{ie11}\begin{cases}
					&\mathbb{E}\pp{\normi{\xh(s)-\xi}{-1}{1}^2}\leq C\pr{1+\mathbb{E}\pp{\norm{\beta_1\pr{\xi}}_{\Ho{1}}^2+\norm{\xi}_{\Hmo{1}}^2+\norm{\eta}_{\Hmo{2}}^2}}^{\frac{3}{2}}(s-t)^{\frac{3}{2}},\\
					&\mathbb{E}\pp{\normi{\yh(s)-\eta}{-1}{1}^2}\leq C\pr{1+\mathbb{E}\pp{\norm{\beta_2\pr{\eta}}_{\Ho{2}}^2+\norm{\xi}_{\Hmo{1}}^2+\norm{\eta}_{\Hmo{2}}^2}}^{\frac{3}{2}}(s-t)^{\frac{3}{2}}.
					\end{cases}
					\end{equation}
					
				\end{proposition}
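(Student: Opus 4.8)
The plan is to run a pathwise $\Hmo{1}$-energy estimate on the increment $\xh(s)-\xi$ and to read off the rate from the $\Lo{2}{1}$-smallness of the Brownian exponential increment supplied by \eqref{bun'}. Fixing $\omega$ and differentiating along \eqref{wq8},
\[
\tfrac12\tfrac{d}{ds}\norm{\xh(s)-\xi}_{\Hmo{1}}^2=\scal{\Gamma_1(s,t)\Delta\beta_1(\Gamma_1(t,s)\xh(s)),\xh(s)-\xi}_{\Hmo{1}}+\scal{f_1(\xi,\eta,u(s)),\xh(s)-\xi}_{\Hmo{1}}.
\]
Setting $w(s):=\Gamma_1(t,s)\xh(s)$, moving $\Gamma_1(s,t)$ to the second argument by the skew-symmetry \eqref{wq5}, and using $\scal{\Delta\phi,\psi}_{\Hmo{1}}=-\scal{\phi,\psi}_{\Lo{2}{1}}$, the first term equals $-\scal{\beta_1(w(s)),\Gamma_1(t,s)(\xh(s)-\xi)}_{\Lo{2}{1}}$, where $\Gamma_1(t,s)(\xh(s)-\xi)=w(s)-\Gamma_1(t,s)\xi$. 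I would then insert $\beta_1(\Gamma_1(t,s)\xi)$ and drop the nonnegative monotone part, since by \eqref{Ass1}(ii) one has $\scal{\beta_1(w(s))-\beta_1(\Gamma_1(t,s)\xi),w(s)-\Gamma_1(t,s)\xi}_{\Lo{2}{1}}\geq\alpha_1\norm{\Gamma_1(t,s)(\xh(s)-\xi)}_{\Lo{2}{1}}^2\geq0$.

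Three pieces survive. The forcing obeys $\abs{\scal{f_1(\xi,\eta,u(s)),\xh(s)-\xi}_{\Hmo{1}}}\leq\norm{f_1(\xi,\eta,u(s))}_{\Hmo{1}}\norm{\xh(s)-\xi}_{\Hmo{1}}$ with, by \eqref{Ass1}(iii), $\norm{f_1(\xi,\eta,u(s))}_{\Hmo{1}}\leq C\pr{1+\norm{\xi}_{\Hmo{1}}+\norm{\eta}_{\Hmo{2}}}$. For the $\beta_1(\xi)$ part I use that $\beta_1(\xi)\in\Ho{1}$ together with the duality $\abs{\scal{\phi,\psi}_{\Lo{2}{1}}}\leq\norm{\phi}_{\Ho{1}}\norm{\psi}_{\Hmo{1}}$ and the contraction \eqref{wq3}, bounding $\abs{\scal{\beta_1(\xi),\Gamma_1(t,s)(\xh(s)-\xi)}_{\Lo{2}{1}}}\leq\norm{\beta_1(\xi)}_{\Ho{1}}\norm{\xh(s)-\xi}_{\Hmo{1}}$. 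The genuine remainder is $R(s):=-\scal{\beta_1(\Gamma_1(t,s)\xi)-\beta_1(\xi),\Gamma_1(t,s)(\xh(s)-\xi)}_{\Lo{2}{1}}$, which the Lipschitz bound \eqref{Ass1}(i) controls \emph{only} in $\Lo{2}{1}$: $\abs{R(s)}\leq[\beta_1]_1\norm{\Gamma_1(t,s)\xi-\xi}_{\Lo{2}{1}}M(s)$ with $M(s):=\norm{\Gamma_1(t,s)(\xh(s)-\xi)}_{\Lo{2}{1}}$.

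Writing $a:=\norm{\beta_1(\xi)}_{\Ho{1}}+\sup_{s}\norm{f_1(\xi,\eta,u(s))}_{\Hmo{1}}$ (independent of $s$), the above yields $\tfrac{d}{ds}\norm{\xh(s)-\xi}_{\Hmo{1}}^2\leq 2a\norm{\xh(s)-\xi}_{\Hmo{1}}+2\abs{R(s)}$. Integrating, passing to $P(s):=\sup_{t\leq r\leq s}\norm{\xh(r)-\xi}_{\Hmo{1}}$ and solving the resulting quadratic inequality gives the pathwise estimate $\norm{\xh(s)-\xi}_{\Hmo{1}}^2\leq 4a^2(s-t)^2+4\int_t^s\abs{R(r)}\,dr$. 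Taking expectations, the first term is $\leq 4(s-t)^2\,\mathbb{E}[a^2]$, which is dominated by the claimed $(s-t)^{\frac32}$ for $s-t$ small; for the second, Cauchy--Schwarz in $\omega$ and \eqref{bun'} give $\pr{\mathbb{E}\norm{\Gamma_1(t,r)\xi-\xi}_{\Lo{2}{1}}^2}^{1/2}\leq C\pr{\mathbb{E}\norm{\xi}_{\Hmo{1}}^2}^{1/2}\sqrt{r-t}$, so that, provided $\sup_r\mathbb{E}[M(r)^2]\leq C\pr{1+\mathbb{E}[\dots]}$, one gets $\mathbb{E}\int_t^s\abs{R}\,dr\leq C\pr{1+\mathbb{E}[\dots]}\int_t^s\sqrt{r-t}\,dr$, i.e. order $(s-t)^{\frac32}$. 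It is precisely the half-power $\sqrt{r-t}$ (and not $r-t$) delivered by \eqref{bun'} that caps the rate at $\frac32$ rather than $2$. Combining the two contributions and using $1+\mathbb{E}[\dots]\leq\pr{1+\mathbb{E}[\dots]}^{3/2}$ yields \eqref{ie11}; the $\yh$-estimate is identical after swapping the indices $1$ and $2$.

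The crux is the remainder $R$. Since the exponential increment $\Gamma_1(t,s)\xi-\xi$ is small only in $\Lo{2}{1}$, it must be paired with the $\Lo{2}{1}$-quantity $M(s)$, which is \emph{not} small; hence the argument needs the a priori $\Lo{2}{1}$-stability bound $\sup_{r}\mathbb{E}[M(r)^2]\leq C\pr{1+\mathbb{E}\pp{\norm{\beta_1(\xi)}_{\Ho{1}}^2+\norm{\xi}_{\Hmo{1}}^2+\norm{\eta}_{\Hmo{2}}^2}}$. I would obtain it from a separate $\Lo{2}{1}$-energy estimate on the (physical) scheme, relying on the dissipativity $\scal{\Delta\beta_1(w),w}_{\Lo{2}{1}}\leq0$, on $B_1\in\mathcal{L}\pr{\Lo{2}{1};\Ho{1}}$ to absorb the noise contribution, on assumption \eqref{Ass1}(iv), and on the interpolation $\norm{\xi}_{\Lo{2}{1}}^2\leq C\norm{\beta_1(\xi)}_{\Ho{1}}\norm{\xi}_{\Hmo{1}}$ (valid because $\beta_1$ has slope $\geq\alpha_1$) to re-express the $\Lo{2}{1}$-data in the quantities of \eqref{ie11}. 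Being a standard monotone-type estimate, this is the one ingredient I would either import from \cite{brezz} or run in parallel; the rest is the computation above.
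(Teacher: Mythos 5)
Your overall skeleton is the paper's: the $\Hmo{1}$-energy identity for $\xh(s)-\xi$, the transposition of $\Gamma_1(s,t)$ via \eqref{wq5}, the three-way splitting of the $\Delta\beta_1$ term by inserting $\beta_1(\Gamma_1(t,s)\xi)$ and $\beta_1(\xi)$, the duality bound $\norm{\beta_1(\xi)}_{\Ho{1}}\norm{\xh(s)-\xi}_{\Hmo{1}}$ for the last piece, and \eqref{bun'} as the source of the half-power that limits the rate to $(s-t)^{3/2}$. But there is a genuine gap at the one point where you deviate. You discard the strongly monotone piece as merely nonpositive, and are then left with the remainder $R(s)$, bounded by $[\beta_1]_1\norm{(\Gamma_1(t,s)-\mathbb{I})\xi}_{\Lo{2}{1}}\,M(s)$ with $M(s)=\norm{\Gamma_1(t,s)(\xh(s)-\xi)}_{\Lo{2}{1}}$; to close the argument you must assume $\sup_r\mathbb{E}\pp{M(r)^2}\leq C\pr{1+\mathbb{E}\pp{\dots}}$, which you only assert as "a standard monotone-type estimate" to be imported or run in parallel. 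In the present framework this is not a routine add-on: the scheme \eqref{wq8} lives naturally in $\Hmo{1}$, the group $\Gamma_1$ is a contraction in $\Hmo{1}$ but not in $\Lo{2}{1}$ and cannot simply be transposed in the $\Lo{2}{1}$ pairing (skew-symmetry of $B_1$ holds in $\Hmo{1}$, not in $\Lo{2}{1}$), so the $\Lo{2}{1}$-energy identity $\frac{d}{ds}\norm{\xh}^2_{\Lo{2}{1}}=2\scal{\Gamma_1(s,t)\Delta\beta_1(\Gamma_1(t,s)\xh),\xh}_{\Lo{2}{1}}+\dots$ does not yield the sign you invoke, and the natural $\Hmo{1}$ estimate only gives an $\mathbb{L}^2$-in-time (not sup-in-time) control of the $\Lo{2}{1}$-norm. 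Your interpolation patch addresses only the integrability of the \emph{initial datum}, not of $\xh(s)$ itself.

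The fix is exactly what the paper does, and it removes the need for any a priori $\Lo{2}{1}$-bound on the scheme: keep the dissipation $-\alpha_1\norm{\Gamma_1(t,s)(\xh(s)-\xi)}^2_{\Lo{2}{1}}=-\alpha_1 M(s)^2$ produced by assumption \eqref{Ass1}(ii), and absorb the cross term $[\beta_1]_1\norm{\Gamma_1(t,s)-\mathbb{I}}_{\mathcal{L}\pr{\Hmo{1};\Lo{2}{1}}}\norm{\xi}_{\Hmo{1}}\,M(s)$ into it by Young's inequality $ab\leq\frac{1}{4\delta}a^2+\delta b^2$. What survives is $C[\beta_1]_1^2\norm{\Gamma_1(t,s)-\mathbb{I}}^2_{\mathcal{L}\pr{\Hmo{1};\Lo{2}{1}}}\norm{\xi}^2_{\Hmo{1}}$, handled by \eqref{bun''}--\eqref{bun'}, plus the $\norm{\beta_1(\xi)}_{\Ho{1}}\norm{\xh(s)-\xi}_{\Hmo{1}}$ and $f_1$ terms; a first pass with $\sqrt{a}\leq\frac12(1+a)$ gives the order $(s-t)$ bound, and reinjecting it into $\int_t^s\pr{\mathbb{E}\pp{\norm{\xh(l)-\xi}^2_{\Hmo{1}}}}^{1/2}dl$ yields $(s-t)^{3/2}$. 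With that substitution the rest of your computation (including the quadratic-inequality step and the treatment of the forcing) goes through.
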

				
				\begin{proof}
					By \eqref{wq8} and \eqref{wq5},  we have
					\begin{align*}
					\frac{d}{ds} \normi{\xh-\xi}{-1}{1}^2=&2\scal{\Gamma_1(s,t)\Delta\beta_1\pr{\Gamma_1(t,s)\xh},\xh-\xi}_{\Hmo{1}}\\&+2\scal{\Gamma_1(s,t)f_1\pr{\Gamma_1(t,s)\xh,\Gamma_2(t,s)\yh,u(s)},\xh-\xi}_{\Hmo{1}}.
					\end{align*}
					For the first term, we use the monotonicity of $\beta_1$ and the duality products to get
					\begin{align*}
					& \scal{\Gamma_1(s,t)\Delta\beta_1\pr{\Gamma_1(t,s)\xh(s)},\xh(s)-\xi}_{\Hmo{1}}\\&=\scal{\Delta\pr{\beta_1\pr{\Gamma_1(t,s)\xh(s)}-\beta_1\pr{\Gamma_1(t,s)\xi}},\Gamma_1(t,s)\pr{\xh(s)-\xi}}_{\Hmo{1}}\\&-\scal{\beta_1\pr{\Gamma_1(t,s)\xi}-\beta_1(\xi),\Gamma_1(t,s)\pr{\xh(s)-\xi}}_{\Lo{2}{1}}\\&-\scal{\beta_1\pr{\xi},\Gamma_1(t,s)\pr{\xh(s)-\xi}}_{\Ho{1},\Hmo{1}}\\
					&\leq -\alpha\norm{\Gamma_1(t,s)\xh(s)-\Gamma_1(t,s)\xi}_{\Lo{2}{1}}^2\\
					&+\pp{\beta_1}_1\norm{\Gamma_1(t,s)-\mathbb{I}}_{\mathcal{L}\pr{\Hmo{1};\Lo{2}{1}}}\norm{\xi}_{\Hmo{1}}\norm{\Gamma_1(t,s)\xh(s)-\Gamma_1(t,s)\xi}_{\Lo{2}{1}}\\
					&+\norm{\beta_1\pr{\xi}}_{\Ho{1}}\norm{\Gamma_1(t,s)\pr{\xh(s)-\xi}}_{\Hmo{1}}.
					\end{align*}
					Using classical inequalities of type $ab\leq \frac{1}{4\delta}a^2+\delta b^2$ and the contraction property \eqref{wq3}, one gets 
					\begin{align*}
					& \scal{\Gamma_1(s,t)\Delta\beta_1\pr{\Gamma_1(t,s)\xh(s)},\xh(s)-\xi}_{\Hmo{1}}\\&\leq C\pp{\beta_1}_1^2\norm{\Gamma_1(t,s)-\mathbb{I}}_{\mathcal{L}\pr{\Hmo{1};\Lo{2}{1}}}^2\norm{\xi}_{\Hmo{1}}^2+\norm{\beta_1\pr{\xi}}_{\Ho{1}}\norm{\xh(s)-\xi}_{\Hmo{1}}.
					\end{align*}
					The term in $f_1$ is treated similarly due to Lipschitz-continuity and growth bounds. Putting these together and taking expectancy,  it follows (again due to the independence of $\Gamma_1(t,s)$ of $\mathcal{F}_t$) that
					\begin{equation}
					\label{estim0}\begin{split}
					&\mathbb{E}\pp{\normi{\xh(s)-\xi}{-1}{1}^2}\\\leq &C\pp{\beta_1}_1^2\int_t^s\mathbb{E}\pp{\norm{\Gamma_1(t,l)-\mathbb{I}}_{\mathcal{L}\pr{\Hmo{1};\Lo{2}{1}}}^2}dl\mathbb{E}\pp{\norm{\xi}_{\Hmo{1}}^2}\\
					&+C\int_t^s\mathbb{E}\pp{\norm{\beta_1\pr{\xi}}_{\Ho{1}}\norm{\xh(l)-\xi}_{\Hmo{1}}}dl\\
					&+C\int_t^s\mathbb{E}\pp{\pr{1+\norm{\xi}_{\Hmo{1}}+\norm{\eta}_{\Hmo{2}}}\norm{\xh(l)-\xi}_{\Hmo{1}}}dl.\\
					\leq &C\pp{\beta_1}_1^2\int_t^s\mathbb{E}\pp{\norm{\Gamma_1(t,l)-\mathbb{I}}_{\mathcal{L}\pr{\Hmo{1};\Lo{2}{1}}}^2}dl\mathbb{E}\pp{\norm{\xi}_{\Hmo{1}}^2}\\
					&+C\pr{1+\mathbb{E}\pp{\norm{\beta_1\pr{\xi}}_{\Ho{1}}^2+\norm{\xi}_{\Hmo{1}}^2+\norm{\eta}_{\Hmo{2}}^2}}\int_t^s\pr{\mathbb{E}\pp{\norm{\xh(l)-\xi}_{\Hmo{1}}^2}}^{\frac{1}{2}}dl,
					\end{split}
					\end{equation}
					where we have used Cauchy-type inequalities for the expectation in the last two terms.  In a first step, by using $\sqrt{a}\leq \frac{1}{2}(1+a)$ for the last term, it holds that 
					\[\mathbb{E}\pp{\normi{\xh(s)-\xi}{-1}{1}^2}\leq C\pr{1+\mathbb{E}\pp{\norm{\beta_1\pr{\xi}}_{\Ho{1}}^2+\norm{\xi}_{\Hmo{1}}^2+\norm{\eta}_{\Hmo{2}}^2}}(s-t).\]By plugging this into \eqref{estim0} and recalling that \eqref{bun'} holds true, we get the desired result. The second assertion is quite similar.
			\end{proof}}

			Next, we shall compare  \eqref{e3}, 
			with its Euler's approximation \eqref{wq8}. To this end, we have the following result
			
			\begin{proposition}\label{prop2} There exists a constant $C>0$ (depending only on the time horizon $T>0$ and $\omega\in \Omega$) such that, for every $\xi\in\mathbb{L}^2\pr{\Omega,\mathcal{F}_t,\mathbb{P};\Lo{2}{1}}$, every $\eta\in\mathbb{L}^2\pr{\Omega,\mathcal{F}_t,\mathbb{P};\Lo{2}{2}}$,  every admissible control $u\in \mathcal{U}$, and every $t\leq s\leq T$,
				\begin{equation}\begin{aligned}&\mathbb{E}\pp{\|x^{t,\xi,\eta,u}(s)-\hat{x}^{t,\xi,\eta,u}(s)\|^2_{\Hmo{1}}+\|y^{t,\xi,\eta,u}(s)-\hat{y}^{t,\xi,\eta,u}(s)\|^2_{\Hmo{2}}}\\&\leq C\pr{1+\mathbb{E}\pp{\|\beta_1(\xi)\|^2_{\Ho{1}}+\|\beta_2(\eta)\|^2_{\Ho{2}}+\norm{\xi}_{\Hmo{1}}^2+\norm{\eta}_{\Hmo{2}}^2}}^{3}(s-t)^{\frac{9}{4}}.\end{aligned}
				\end{equation}
				
			\end{proposition}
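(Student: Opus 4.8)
The plan is to run a Gronwall-type energy estimate for the coupled difference, disposing of the porous-media terms through monotonicity and then quantifying the drift mismatch by means of Proposition \ref{prop1} and \eqref{bun'}. Abbreviate $p:=\x-\xh$, $q:=\y-\yh$ and $Z:=\normi{p}{-1}{1}^2+\normi{q}{-1}{2}^2$, and set $\mathcal D:=1+\E{\normi{\beta_1(\xi)}{1}{1}^2+\normi{\beta_2(\eta)}{1}{2}^2+\normi{\xi}{-1}{1}^2+\normi{\eta}{-1}{2}^2}$. Subtracting \eqref{wq8} from \eqref{e3} and pairing the first line with $p$ in $\Hmo{1}$ (the second with $q$ in $\Hmo{2}$), I would first treat the diffusion contribution $\scal{\Gamma_1(s,t)\Delta\pr{\beta_1(\Gamma_1(t,s)\x)-\beta_1(\Gamma_1(t,s)\xh)},p}_{\Hmo{1}}$ exactly as in Proposition \ref{prop1}: by the skew-adjointness \eqref{wq5} it equals $-\scal{\beta_1(\Gamma_1(t,s)\x)-\beta_1(\Gamma_1(t,s)\xh),\Gamma_1(t,s)p}_{\Lo{2}{1}}$, and by the strong monotonicity \eqref{Ass1}(ii) this is $\le-\alpha_1\norm{\Gamma_1(t,s)p}_{\Lo{2}{1}}^2\le0$. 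The same sign holds for the $q$-component, so both diffusion terms are simply discarded.

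It remains to estimate the drift mismatch. I would decompose
\[
\Gamma_1(s,t)f_1\pr{\Gamma_1(t,s)\x,\Gamma_2(t,s)\y,u}-f_1(\xi,\eta,u)=A_1+B_1+C_1,
\]
with $A_1:=\Gamma_1(s,t)\pp{f_1(\Gamma_1(t,s)\x,\Gamma_2(t,s)\y,u)-f_1(\Gamma_1(t,s)\xh,\Gamma_2(t,s)\yh,u)}$, $B_1:=\Gamma_1(s,t)\pp{f_1(\Gamma_1(t,s)\xh,\Gamma_2(t,s)\yh,u)-f_1(\xi,\eta,u)}$ and $C_1:=\pr{\Gamma_1(s,t)-\mathbb I}f_1(\xi,\eta,u)$. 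By the contraction \eqref{wq3} and the Lipschitz bound \eqref{Ass1}(iii), $\norm{A_1}_{\Hmo{1}}\le\pp{f_1}_1\pr{\normi{p}{-1}{1}+\normi{q}{-1}{2}}$, so after pairing with $p$ it yields a Gronwall-type term $\le C\,Z$; the analogous term from the $q$-line behaves identically. The pieces $B_1,C_1$ (and their counterparts $B_2,C_2$) are genuine forcing terms that do not involve $(p,q)$.

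For the forcing I would proceed as follows. Lipschitzianity gives $\norm{B_1}_{\Hmo{1}}\le\pp{f_1}_1\pr{\norm{\Gamma_1(t,s)\xh(s)-\xi}_{\Hmo{1}}+\norm{\Gamma_2(t,s)\yh(s)-\eta}_{\Hmo{2}}}$, and splitting $\Gamma_1(t,s)\xh(s)-\xi=\Gamma_1(t,s)(\xh(s)-\xi)+(\Gamma_1(t,s)-\mathbb I)\xi$, the first summand is controlled in mean square by Proposition \ref{prop1} (order $(s-t)^{3/2}$, with data $\mathcal D^{3/2}$) and the second by \eqref{bun'} together with $\Lo{2}{1}\hookrightarrow\Hmo{1}$ (order $(s-t)$). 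For $C_1$ I would use $\norm{C_1}_{\Hmo{1}}\le C\norm{(\Gamma_1(s,t)-\mathbb I)f_1(\xi,\eta,u)}_{\Lo{2}{1}}$ and the uniform growth $\norm{f_1(\xi,\eta,u)}_{\Hmo{1}}\le C(1+\normi{\xi}{-1}{1}+\normi{\eta}{-1}{2})$ coming from \eqref{Ass1}(iii): since this bound is $\mathcal F_t$-measurable it is independent of the increment $W(s)-W(t)$, so separating the two factors and invoking \eqref{bun'} produces a mean-square bound of order $(s-t)$. Collecting, and setting $G(s):=\norm{B_1}_{\Hmo{1}}+\norm{C_1}_{\Hmo{1}}+\norm{B_2}_{\Hmo{2}}+\norm{C_2}_{\Hmo{2}}$, one obtains $\E{G(s)^2}\le C\,\mathcal D^{3/2}(s-t)$.

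Since $p(t)=q(t)=0$, summing the two pairings and bounding each forcing pairing by $G\sqrt Z$ (using $\normi{p}{-1}{1},\normi{q}{-1}{2}\le\sqrt Z$) gives $\tfrac{d}{ds}Z\le C\,Z+2G\sqrt Z$, hence $\tfrac{d}{ds}\sqrt Z\le C\sqrt Z+G$ and, after an integrating-factor Gronwall step, $\sqrt{Z(s)}\le e^{C(s-t)}\int_t^sG(l)\,dl$. Squaring, applying Cauchy--Schwarz in $l$ and taking expectations yields $\E{Z(s)}\le e^{2C(s-t)}(s-t)\int_t^s\E{G(l)^2}\,dl\le C\,\mathcal D^{3/2}(s-t)^3$, and the claimed estimate follows a fortiori, since for $t\le s\le T$ one has $(s-t)^3\le T^{3/4}(s-t)^{9/4}$ and $\mathcal D^{3/2}\le\mathcal D^{3}$, the constant then depending on $T$ (indeed the constant may even be taken deterministic here; the $\omega$-dependence announced in the statement is what one gets if the stochastic exponentials are instead controlled pathwise). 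The main obstacle I anticipate is the bookkeeping of the two competing rates in the forcing --- the $(s-t)^{3/2}$ furnished by Proposition \ref{prop1} for $\xh(s)-\xi$ against the first-order $(s-t)$ produced by the group increments $\Gamma_i(t,s)-\mathbb I$ --- together with the measurability subtlety that $u(s)$ is only $\mathcal F_s$-adapted whereas $\Gamma_i(s,t)-\mathbb I$ must be decorrelated from the $\mathcal F_t$-data, which is precisely what forces the use of the uniform-in-$u$ growth bound on $f_i$.
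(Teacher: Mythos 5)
Your proof is correct and rests on the same skeleton as the paper's: the diffusion difference is discarded via the strong monotonicity of $\beta_1,\beta_2$ after transporting by $\Gamma_i(t,s)$ through \eqref{wq5}, and the drift mismatch is handled by exactly the paper's intercalation of $\pr{\xh,\yh}$, with the group increments $\Gamma_i(t,s)-\mathbb{I}$ controlled by \eqref{bun'} and $\xh(s)-\xi$, $\yh(s)-\eta$ controlled by Proposition \ref{prop1}; your treatment of the measurability of $u(s)$ via the uniform-in-$u$ growth of $f_i$ and the independence of the Brownian increment from $\mathcal{F}_t$ is also the mechanism the paper relies on. The one genuine divergence is the closing Gronwall step. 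The paper first takes expectations, arrives at $\Phi(s)\le C(s-t)^{\frac{1}{2}}\int_t^s\Phi(r)^{\frac{1}{2}}dr+C\int_t^s\Phi(r)dr$, and bootstraps twice (first $\Phi\le C(s-t)^{\frac{3}{2}}$, then re-substitution), which stops at the exponent $\frac{9}{4}$. You instead pass to the pathwise inequality for $\sqrt{Z}$ (legitimate here because \eqref{e3} is a random \emph{deterministic} system, modulo the routine regularisation $\sqrt{Z+\epsilon}$ near the zero set of $Z$), integrate, and only then take expectations; this reaches $(s-t)^{3}$ --- the fixed point of the paper's iteration --- in one stroke, so your bound is in fact sharper and yields the stated $(s-t)^{\frac{9}{4}}$ a fortiori on a bounded horizon. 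Your parenthetical observation that the constant may be taken deterministic, the $\omega$-dependence in the statement being an artefact, is also accurate.
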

			\begin{proof}
				The proof holds immediately from Proposition \ref{prop1} by arguing similarly to \cite[Proposition 8]{viability}. { Let us just give a sketch of proof.  As usual, one begins with writing down the differential formula for $\normi{\x(s)-\xh(s)}{-1}{1}^2$ for $t\leq s\leq T$ and distinguishes two terms.
					1. For the term in $\Delta\beta_1$, one has
					\begin{align*}
					&\scal{\Gamma_1(s,t)\pr{\Delta\beta_1\pr{\Gamma_1(t,s)\x(s)}-\Delta\beta_1\pr{\Gamma_1(t,s)\xh(s)}},\x(s)-\xh(s)}_{\Hmo{1}}\\&=-\scal{\beta_1\pr{\Gamma_1(t,s)\x(s)}-\beta_1\pr{\Gamma_1(t,s)\xh(s)},\Gamma_1(t,s)\x(s)-\Gamma_1(t,s)\xh(s)}_{\Lo{2}{1}}
					\\&\leq -\alpha_1\norm{\Gamma_1(t,s)\x(s)-\Gamma_1(t,s)\xh(s)}_{\Lo{2}{1}}^2.
					\end{align*}
					Similarly, the terms involving $f_1$ are dealt with as follows (recalling that $e^{rB_1}$ is a contraction in $\Hmo{1}$), \begin{align*}
					&\big\langle \Gamma_1(s,t)f_1\pr{\Gamma_1(t,s)\x(s),\Gamma_2(t,s)\y(s),u(s)}-f_1\pr{\xi,\eta,u(s)},\\ & \ \ \ \ \ \x(s)-\xh(s)\big\rangle_{\Hmo{1}}\\
					&\leq \norm{\x(s)-\xh(s)}_{\Hmo{1}}\norm{\pr{\Gamma_1(s,t)-1}f_1\pr{\xi,\eta,y(s)}}_{\Hmo{1}}
					\\&+\norm{\x(s)-\xh(s)}_{\Hmo{1}}\pp{f_1}_1\pr{\norm{\pr{\Gamma_1(t,s)-1}\xi}_{\Hmo{1}}+\norm{\x(s)-\xi}_{\Hmo{1}}}\\&+\norm{\x(s)-\xh(s)}_{\Hmo{1}}\pp{f_1}_1\pr{\norm{\pr{\Gamma_2(t,s)-1}\eta}_{\Hmo{2}}+\norm{\y(s)-\eta}_{\Hmo{2}}}.
					\end{align*}
					One intercalates $\xh$ and $\yh$ to have
					\begin{align*}
					&\big\langle \Gamma_1(s,t)f_1\pr{\Gamma_1(t,s)\x(s),\Gamma_2(t,s)\y(s),u(s)}-f_1\pr{\xi,\eta,u(s)},\\ & \ \ \ \ \ \x(s)-\xh(s)\big\rangle_{\Hmo{1}}\\
					&\leq \norm{\x(s)-\xh(s)}_{\Hmo{1}}\norm{\pr{\Gamma_1(s,t)-1}f_1\pr{\xi,\eta,y(s)}}_{\Hmo{1}}
					\\&+\norm{\x(s)-\xh(s)}_{\Hmo{1}}\pp{f_1}_1\pr{\norm{\pr{\Gamma_1(t,s)-1}\xi}_{\Hmo{1}}+\norm{\xh(s)-\xi}_{\Hmo{1}}}\\&+\norm{\x(s)-\xh(s)}_{\Hmo{1}}\pp{f_1}_1\pr{\norm{\pr{\Gamma_2(t,s)-1}\eta}_{\Hmo{2}}+\norm{\yh(s)-\eta}_{\Hmo{2}}}\\
					&+\pp{f_1}_1\norm{\x(s)-\xh(s)}_{\Hmo{1}}^2\\
					&+\pp{f_1}_1\norm{\x(s)-\xh(s)}_{\Hmo{1}}\norm{\y(s)-\yh(s)}_{\Hmo{2}}.
					\end{align*}
					Employing Cauchy-type inequalities and \eqref{ie11} and \eqref{bun'}, one has
					\begin{align*}
					&\int_t^s\mathbb{E}\pp{\norm{\x(r)-\xh(s)}_{\Hmo{1}}\norm{\pr{\Gamma_1(t,r)-1}\xi}_{\Hmo{1}}}dr\\&\leq \int_t^s \pr{\pr{\mathbb{E}\pp{\norm{\x(r)-\xh(r)}_{\Hmo{1}}^2}}^{\frac{1}{2}}\pr{\mathbb{E}\pp{\norm{\pr{\Gamma_1(t,r)-1}\xi}_{\Hmo{1}}^2}}^{\frac{1}{2}}}dr\\
					&\leq C\pr{1+\mathbb{E}\pp{\norm{\xi}_{\Hmo{1}}^2}}(s-t)^{\frac{1}{2}}\int_t^s \pr{\mathbb{E}\pp{\norm{\x(r)-\xh(r)}_{\Hmo{1}}^2}}^{\frac{1}{2}}dr.
					\end{align*}
					A similar reasoning holds true for $\xh-\xi$ replacing $(\Gamma_1(t,s)-1)\xi$, owing to Proposition \ref{prop1} with the corresponding constant.  The same type of consideration holds true for $\norm{\y-\yh}_{\Hmo{2}}^2$. Adding all these terms, one has, with a constant \[C:=C\pr{1+\mathbb{E}\pp{\|\beta_1(\xi)\|^2_{\Ho{1}}+\|\beta_2(\eta)\|^2_{\Ho{2}}+\norm{\xi}_{\Hmo{1}}^2+\norm{\eta}_{\Hmo{2}}^2}}^{\frac{3}{2}},\]the following
					\begin{align*}
					\Phi(s)&:=\mathbb{E}\pp{\norm{\x(s)-\xh(s)}_{\Hmo{1}}^2+\norm{\y(s)-\yh(s)}_{\Hmo{2}}^2}\\
					&\leq C(s-t)^{\frac{1}{2}}\int_t^s\pr{\mathbb{E}\pp{\norm{\x(r)-\xh(r)}_{\Hmo{1}}^2}}^{\frac{1}{2}}dr\\
					&+C(s-t)^{\frac{1}{2}}\pr{\mathbb{E}\pp{\norm{\y(r)-\yh(r)}_{\Hmo{2}}^2}}^{\frac{1}{2}}dr+C\int_t^s\Phi(r)dr.
					\end{align*}
					Applying Gronwall's inequality combined with the inequality $\sqrt{a}\leq \frac{1}{2}(1+a)$, one gets $\Phi(s)\leq C(s-t)^{\frac{3}{2}}$. Then, by substituting this into the first term on the righ side, and invoking,once again Gronwall's inequality, one gets the desired estimates.}
			\end{proof}

			{As a consequence,  we have that, if $\pr{\x,\y}$ remains in the set of constraints (with some $u\in\mathcal{U}$),  then, Proposition \ref{prop2} applied for $s=t+\varepsilon$ yields that $\pr{\theta_1^\varepsilon,\theta_2^\varepsilon}:=\pr{\x(t+\varepsilon),\y(t+\varepsilon)}$ constitute the good choice in Definition \ref{DefLambdaQT}. This proves the necessity of the quasi-tangency condition in Theorem \ref{Tq1}.}
			\section{Application} Let us notice that, owing to the isometry \eqref{wq3}, we have that the solution $X$ to \eqref{e1} and $x$ to \eqref{e3} satisfy $\|X(s)\|_{H^{-1}(\mathcal{O}_1)}=\|x(s)\|_{H^{-1}(\mathcal{O}_1)}$, due to the relation $X(s)=\Gamma_1(t,s)x(s).$ Therefore, it suffices to study the stability of the random deterministic porous media equation. We shall apply the ideas from \cite[Section 5]{viability}.  To this end, we consider in \eqref{e3} the case in which $\beta_2\equiv 0$ and $f_2(x,y,u)=-cy,\ c>0,$ and the set
			$$K=\left\{(\xi,\eta)\in H^{-1}(\mathcal{O}_1)\times\mathbb{R}:\ \sum_{k=1}^\infty\left<\xi,\sqrt{\lambda_k}e_k\right>^2_{H^{-1}(\mathcal{O}_1)}\leq \eta\right\},$$where $\left\{\lambda_k\right\}_{k\in\mathbb{N}^*}$ is the set of eigenvalues of the Dirichlet-Laplace operator in $\mathbb{L}^2(\mathcal{O}_1)$ with the corresponding eigenfunctions $\left\{e_k\right\}_{k\in\mathbb{N}^*}$. The viability of such sets implies that  $\|x^{0,\xi,\eta,u}(t)\|^2_{H^{-1}(\mathcal{O}_1)}\leq \eta e^{-ct}$ $\mathbb{P}-$a.s. provided that  $\|\xi\|^2_{H^{-1}(\mathcal{O}_1)}\leq\eta.$ 
			
			Following the arguments in \cite[Eq. (31)]{viability}, we deduce a \textbf{necessary} condition for invariance of $K$:
			$$\inf_{u\in U}\left\{2\sum_{k=1}^\infty\left<x,e_k\right>_{H^{-1}(\mathcal{O}_1)}\left<-\lambda_k\beta(x)+f_1(x,\norm{x}^2_{H^{-1}(\mathcal{O}_1)},u),e_k\right>_{H^{-1}(\mathcal{O}_1)}+2c\|x\|^2_{H^{-1}(\mathcal{O}_1)}\right\}\leq 0,$$for all $x\in \Hmo{1}$. 
			{We emphasize that this cannot be inferred from \cite{viability}, but it is due to the new and stronger necessary condition.  The proof, however, is similar to what is done in \cite[Section 5]{viability}.\\
				Using exactly the same reasoning as the one in \cite[Section 5]{viability}, it can be shown that a sufficient condition for an uncontrolled system to be $c$-exponentially stable is \begin{align*}&\psi_j(x):=\\&2\sum_{k=1}^j\left<x,e_k\right>_{H^{-1}(\mathcal{O}_1)}\left<-\lambda_k\beta(x)+f_1(x,\sum_{l=1}^j\scal{x,e_l}^2_{H^{-1}(\mathcal{O}_1)}),e_k\right>_{H^{-1}(\mathcal{O}_1)}+2c\sum_{l=1}^j\scal{x,e_l}^2_{H^{-1}(\mathcal{O}_1)}\leq 0,\end{align*}for all $j$ large enough.\\
				For controlled coefficients $f_1$, the aforementioned condition concerns a $\psi_j(x,u)$ and needs to exhibit $u$ independent of $j$ i.e. 
				$$U(x):=\set{u\in U:\ \psi_j(x,u)\leq 0,\ \forall j\geq 1}\neq\emptyset,$$
				and the set-valued function $x\mapsto U(x)$ needs to admit a measurable selection. 
			}
			{
				\section{Appendix}
				\subsection{The $\varepsilon$-approximate approach to sufficiency in Theorem \ref{Tq1}}\label{A1}
				\begin{definition}\label{DefEpsSol}
					For $0\leq t\leq T$,  initial data $\pr{\xi,\eta}\in\mathbb{K}_t\cap\mathbb{L}^2\pr{\Omega,\mathcal{F}_t,\mathbb{P};\Lo{2}{1}\times\Lo{2}{2}}$ and $\varepsilon>0$, we will call \emph{constrained $\varepsilon$-approximate solution} to \eqref{e3} a 7-uple $\pr{\bar T,\tau,u,\phi_1,\phi_2,\pr{\mathcal{X},\mathcal{Y}}}$ satisfying
					\begin{enumerate}
						\item $t\leq\bar{T}\leq T$;
						\item the measurable $\tau:\pp{t,\bar{T}}\rightarrow\pp{t,\bar{T}}$ is non-decreasing, non-anticipating and at most $\varepsilon$-delayed i.e. 
						$s-\varepsilon\leq\tau(s)\leq s,\ \forall s\in\pp{t,\bar{T}}$;
						\item the control $u\in\mathcal{U}$;
						\item the corrections $\pr{\phi_1,\phi_2}:\pp{t,\bar{T}}\rightarrow \Hmo{1}\times \Hmo{2}$
						are $\mathcal{F}_{\bar T}$-measurable, take their values in $\Lo{2}{i}$ $\mathbb{P}$-a.s.  and
						$\mathbb{E}\pp{\int_t^{\bar T}\normi{\phi_i(l)}{-1}{i}^2dl}\leq \varepsilon\pr{\bar T-t}$;
						\item the processes $\mathcal{X}, \mathcal{Y}$ are $\mathcal{F}_{\bar T}$-measurable, $\Lo{2}{1}\times\Lo{2}{2}$-valued,  and satisfy (in the classical $\Hmo{i}$-sense) \begin{align}\label{EqXYCal}
						\begin{cases}
						d\mathcal{X}(s)&=\pr{\Gamma_1(s,t)\Delta\beta_1\pr{\Gamma_1(t,s)\mathcal{X}(s)}+f_1\pr{\mathcal{X}(\tau(s)),\mathcal{Y}(\tau(s)),u(s)}+\phi_1(s)}ds\\[4pt]
						d\mathcal{Y}(s)&=\pr{\Gamma_2(s,t)\Delta\beta_2\pr{\Gamma_2(t,s)\mathcal{Y}(s)}+f_2\pr{\mathcal{X}(\tau(s)),\mathcal{Y}(\tau(s)),u(s)}+\phi_2(s)}ds,\ s\geq t,\\[4pt]
						\mathcal{X}(t)&=\xi,\ \mathcal{Y}(t)=\eta.
						\end{cases}
						\end{align}
						\item For every $s\in\pp{t,\bar{T}}$, the constraint $\pr{\mathcal{X}(\tau(s)),\mathcal{Y}(\tau(s))}\in K$, $\mathbb{P}$-a.s., $\pr{\mathcal{X}(\bar T),\mathcal{Y}(\bar T)}\in \mathbb{K}_{\bar T}$, $\mathbb{P}$-a.s. and 
						\begin{equation*}
						\mathbb{E}\pp{\normi{\mathcal{X}(\tau(s))-\mathcal{X}(s)}{-1}{1}^2+\normi{\mathcal{Y}(\tau(s))-\mathcal{Y}(s)}{-1}{2}^2}\leq \varepsilon,\ \forall s\in\pp{t,\bar T}.
						\end{equation*}
					\end{enumerate}
				\end{definition}
				\begin{remark}
					The careful reader will have noticed that although we deal with deterministic equations, because of the randomness contained in $\Gamma_i$, we still impose norms related to $\mathbb{L}^2\pr{\Omega,\mathcal{F},\mathbb{P}}$, but, unlike the true stochastic case in \cite{viability}, anticipating solutions $\mathcal{X},\mathcal{Y}$ are not prohibited. The only measurability imposed is with respect to the time horizon $\bar{T}$.
				\end{remark}
				\begin{proposition}\label{PropPropCal}
					There exists a constant $C>0$ such that, for $t\leq s\leq \bar T$ and \[\pr{\xi,\eta}\in\mathbb{L}^2\pr{\Omega,\mathcal{F}_t,\mathbb{P};K\cap\pr{\Lo{2}{1}\times\Lo{2}{2}}}, \]one has the following properties of an $\varepsilon$-approximate solution as described in Definition \ref{DefEpsSol}.
					\begin{enumerate}
						\item $\pr{\mathcal{X},\mathcal{Y}}$ has a modification belonging to $\Lo{2}{1}\times\Lo{2}{2}$.
						\item This modification (still denoted by $\pr{\mathcal{X},\mathcal{Y}}$) is continuous as a time function with values in 
						$\mathbb{L}^2\pr{\Omega,\mathcal{F},\mathbb{P};\Hmo{1}\times\Hmo{2}}$.
						\item The following dependency of the initial data is valid, for $t\leq s\leq \bar T$ \begin{equation}
						\label{Estim2cal}
						\begin{split}
						&\mathbb{E}\pp{\normi{\mathcal{X}(s)-\xi}{-1}{1}^2+\normi{\mathcal{Y}(s)-\eta}{-1}{2}^2}\\[4pt]
						&\leq C\pr{1+\mathbb{E}\pp{\norm{\xi}_{\Lo{2}{1}}^2+\norm{\eta}_{\Lo{2}{1}}^2+\int_t^s\pr{\normi{\phi_1(l)}{-1}{1}^2+\normi{\phi_2(l)}{-1}{2}^2}dl}}(s-t).
						\end{split}
						\end{equation}
					\end{enumerate}
				\end{proposition}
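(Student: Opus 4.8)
The plan is to establish the three assertions in order of increasing delicacy, the core being the energy bound \eqref{Estim2cal} of assertion (3), from which assertion (2) is read off by restarting the clock, and after which assertion (1) is the $\mathbb{L}^2$-smoothing built into the porous medium structure. For (3) I mimic the $H^{-1}$-computation in the proof of Proposition \ref{prop1}, the only new features being the delayed argument $\tau(s)$, the corrections $\phi_i$ and the weaker, $\mathbb{L}^2$-type, regularity of the data. Differentiating $s\mapsto\normi{\mathcal{X}(s)-\xi}{-1}{1}^2+\normi{\mathcal{Y}(s)-\eta}{-1}{2}^2$ along \eqref{EqXYCal} and using the duality \eqref{wq5} to transfer $\Gamma_1(s,t)$ onto the second slot turns the leading pairing into an $\mathbb{L}^2$-pairing,
\[
\scal{\Gamma_1(s,t)\Delta\beta_1\pr{\Gamma_1(t,s)\mathcal{X}(s)},\mathcal{X}(s)-\xi}_{\Hmo{1}}=-\scal{\beta_1\pr{\Gamma_1(t,s)\mathcal{X}(s)},\Gamma_1(t,s)\pr{\mathcal{X}(s)-\xi}}_{\Lo{2}{1}}.
\]
Intercalating $\beta_1\pr{\Gamma_1(t,s)\xi}$ and invoking the coercivity \eqref{Ass1}(ii) yields the good term $-\alpha_1\norm{\Gamma_1(t,s)\pr{\mathcal{X}(s)-\xi}}_{\Lo{2}{1}}^2$, which after a Young inequality absorbs the cross term and leaves $\frac{1}{4\alpha_1}\norm{\beta_1\pr{\Gamma_1(t,s)\xi}}_{\Lo{2}{1}}^2$.

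This is exactly the step responsible for the appearance of $\Lo{2}{i}$-norms of the data in \eqref{Estim2cal}, instead of the $H_0^1$-norms of Proposition \ref{prop1}: since $\beta_1$ is Lipschitz on $\Lo{2}{1}$ by \eqref{Ass1}(iv) and $\Gamma_1$ preserves $\Lo{2}{1}$, one has $\norm{\beta_1\pr{\Gamma_1(t,s)\xi}}_{\Lo{2}{1}}^2\leq C\pr{1+\norm{\Gamma_1(t,s)}_{\mathcal{L}\pr{\Lo{2}{1}}}^2\norm{\xi}_{\Lo{2}{1}}^2}$ pathwise, and the random factor is integrated out upon taking expectation by the finite exponential moments of $\abs{W(s)-W(t)}$ (the folded-normal computation of \eqref{bun''}), which is why the constant in \eqref{Estim2cal} can be taken deterministic. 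The control term is bounded, via \eqref{Ass1}(iii) and Cauchy--Schwarz, by $\frac12\normi{\mathcal{X}(s)-\xi}{-1}{1}^2$ plus a multiple of $1+\norm{\xi}_{\Lo{2}{1}}^2+\norm{\eta}_{\Lo{2}{2}}^2+\normi{\mathcal{X}(\tau(s))-\xi}{-1}{1}^2+\normi{\mathcal{Y}(\tau(s))-\eta}{-1}{2}^2$, where I used $\Lo{2}{i}\hookrightarrow\Hmo{i}$ and reduced the delayed increments to their non-delayed analogues through item (6) of Definition \ref{DefEpsSol}, the resulting $O(\varepsilon)$ being absorbed into the generic constant; the correction gives $\scal{\phi_1(s),\mathcal{X}(s)-\xi}_{\Hmo{1}}\leq\frac12\normi{\phi_1(s)}{-1}{1}^2+\frac12\normi{\mathcal{X}(s)-\xi}{-1}{1}^2$, producing the $\int_t^s\normi{\phi_i(l)}{-1}{i}^2dl$ contribution. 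Writing $\Psi(s):=\mathbb{E}\pp{\normi{\mathcal{X}(s)-\xi}{-1}{1}^2+\normi{\mathcal{Y}(s)-\eta}{-1}{2}^2}$, taking expectation (legitimate since $\Gamma_i(t,s)$ is independent of $\mathcal{F}_t$, as in \eqref{estim0}) and integrating leads to $\Psi(s)\leq A(s-t)+\mathbb{E}\pp{\int_t^s\pr{\normi{\phi_1(l)}{-1}{1}^2+\normi{\phi_2(l)}{-1}{2}^2}dl}+C\int_t^s\Psi(r)\,dr$ with $A=C\pr{1+\mathbb{E}\pp{\norm{\xi}_{\Lo{2}{1}}^2+\norm{\eta}_{\Lo{2}{2}}^2}}$, whence \eqref{Estim2cal} by Gronwall's lemma.

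For assertion (1), the forcing $f_i\pr{\mathcal{X}(\tau(\cdot)),\mathcal{Y}(\tau(\cdot)),u(\cdot)}+\phi_i$ is $\Lo{2}{i}$-valued $\mathbb{P}$-a.s. by \eqref{Ass1}(iv) and item (4) of Definition \ref{DefEpsSol}, and the data lie in $\Lo{2}{i}$; the $\mathbb{L}^2$-regularity theory for the coercive porous medium operator — read in the original variable $X=\Gamma_1\mathcal{X}$, which is harmless since $\Gamma_i$ preserves $\Lo{2}{i}$ — then furnishes the announced $\Lo{2}{1}\times\Lo{2}{2}$-valued modification. Assertion (2) follows from (3) applied over the interval $[s',s]$ (licit thanks to the increment structure of $\Gamma_i$ noted earlier): one obtains $\mathbb{E}\pp{\normi{\mathcal{X}(s)-\mathcal{X}(s')}{-1}{1}^2+\normi{\mathcal{Y}(s)-\mathcal{Y}(s')}{-1}{2}^2}\leq C\abs{s-s'}\to0$, which is the asserted continuity into $\mathbb{L}^2\pr{\Omega,\mathcal{F},\mathbb{P};\Hmo{1}\times\Hmo{2}}$.

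I expect the genuine obstacle to be assertion (1) rather than the energy bound. The cancellation driving \eqref{Estim2cal} works because in $H^{-1}$ the duality \eqref{wq5} makes $\Gamma_i(s,t)$ and $\Gamma_i(t,s)$ mutually adjoint, so the monotone term survives with a favourable sign; in $\Lo{2}{i}$, however, $B_i$ is no longer skew-symmetric and these two operators cease to be adjoint, so the $\mathbb{L}^2$-smoothing cannot be produced by the same computation and must be imported from the degenerate-parabolic $\mathbb{L}^2$-theory, resting essentially on the coercivity \eqref{Ass1}(ii). By contrast, the delayed argument in (3) is only a minor nuisance, entirely controlled by item (6) of Definition \ref{DefEpsSol}.
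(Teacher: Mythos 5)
Your treatment of assertion (3) — the heart of the statement — coincides with the paper's: the authors simply observe that the computation of Proposition \ref{prop1} goes through, except that one stays with the $\Lo{2}{1}$-pairing instead of the $\Ho{1}$--$\Hmo{1}$ duality (no rate better than $s-t$ being sought), which is precisely why the constant involves $\norm{\xi}_{\Lo{2}{1}}^2$ rather than $\norm{\beta_1(\xi)}_{\Ho{1}}^2$. Your handling of the delay through item 6 of Definition \ref{DefEpsSol} and of the corrections $\phi_i$ by Young's inequality, followed by Gronwall, is the intended argument, and assertion (2) is indeed the same estimate restarted (the paper refers to \cite[Proposition 16]{viability} for this; note only that the operator on $[s',s]$ remains referenced to $t$, so one estimates $\normi{\mathcal{X}(s)-\mathcal{X}(s')}{-1}{1}^2$ directly by intercalating $\beta_1\pr{\Gamma_1(t,s)\mathcal{X}(s')}$ rather than literally re-applying (3)).

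Where you diverge — and where you have manufactured a difficulty — is assertion (1). You spend the coercive term $-\alpha_1\norm{\Gamma_1(t,s)\pr{\mathcal{X}(s)-\xi}}_{\Lo{2}{1}}^2$ entirely on absorbing the cross term and then outsource the $\Lo{2}{1}$-valuedness to an unspecified degenerate-parabolic $\mathbb{L}^2$-theory read in the variable $X=\Gamma_1\mathcal{X}$ (which is not a standard porous-medium equation either, because of the delay and the corrections, so this citation would itself need justification). The paper's argument is one line and entirely internal to the energy identity you already wrote: absorb only half of the coercive term, so that after integration in time one retains
\[\alpha_1\,\mathbb{E}\pp{\int_t^{\bar T}\norm{\Gamma_1(t,s)\pr{\mathcal{X}(s)-\xi}}_{\Lo{2}{1}}^2\,ds}\leq C<\infty,\]
whence $\Gamma_1(t,s)\mathcal{X}(s)\in\Lo{2}{1}$ for $\mathbb{P}\times ds$-a.e.\ $(\omega,s)$; since the group $e^{rB_1}$ leaves $\Lo{2}{1}$ invariant, $\mathcal{X}(s)\in\Lo{2}{1}$ a.e., which is the announced modification. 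Assertion (1) is thus a by-product of the computation for assertion (3), not a separate obstacle, and your concern about $B_i$ failing to be skew-symmetric on $\Lo{2}{i}$ is moot because no $\mathbb{L}^2$-energy identity is ever performed.
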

				A quick look at the proof of Proposition \ref{prop1} is enough to convince the reader of the validity of this result. In fact, since we do not wish for estimates sharper than $s-t$,  we do not need to use the pairing $\Ho{1},\Hmo{1}$ and can stay with $\Lo{2}{1}$, hence the constant depending only on $\norm{\xi}_{\Lo{2}{1}}^2$ instead of $\norm{\beta_1(\xi)}_{\Ho{1}}^2$.  
				We recall that \[\norm{\Gamma_1(s,t)}_{\mathcal{L}\pr{\Lo{2}{1};\Lo{2}{1}}}\leq \norm{\Gamma_1(s,t)}_{\mathcal{L}\pr{\Hmo{1};\Lo{2}{1}}}.\]A glance at the proof of Proposition \ref{prop1} shows that $
				\mathbb{E}\pp{\int_0^T\norm{\Gamma_1(t,s)\pr{\mathcal{X}(s)-\xi}}_{\Lo{2}{1}}^2ds}$ is finite such that $\Gamma_1(t,s)\mathcal{X}(s)\in\Lo{2}{1}, \ \mathbb{P}\times ds$-a.s.  on $\Omega\times\pp{0,T}$. Since $e^{sB_1}$ keeps $\Lo{2}{1}$ invariant, we deduce that $\tilde{X}(s)\in\Lo{2}{1}, \ \mathbb{P}\times ds$-a.s. \\
				For full details, the reader is referred to the actual stochastic version (in a more complicated setting) in \cite[Proposition 16 and A2]{viability}.
				
				\begin{theorem}\label{ThAppSol}
					We assume that $K\subset\Hmo{1}\times\Hmo{2}$ is a closed set enjoying the quasi-tangency condition\footnote{in the $\mathbb{L}^2$ setting} on some interval $\pp{0,T}$. Then, for every initial time $t\in\left[0,T\right)$, for every initial data $\pr{\xi,\eta}\in\mathbb{K}_t$ (and $\Lo{2}{1}\times\Lo{2}{2}$-valued), every time horizon $\tilde{T}\in\pp{t,T}$ and every $\varepsilon\in\pr{0,1}$, there exists a constrained $\varepsilon$-approximate solution with $\bar{T}=T$.
				\end{theorem}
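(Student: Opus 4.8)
The plan is to build the seven-uple by a greedy, stepwise procedure and to patch the pieces together through an ordering principle of Br\'ezis--Browder type, in the spirit of the classical viability construction adapted to the random-deterministic framework in \cite{viability}. The mechanism that drives each step is the quasi-tangency hypothesis. Assume an approximate solution has already been constructed on $[t,\bar T']$ with terminal value $\pr{\mathcal{X}(\bar T'),\mathcal{Y}(\bar T')}\in\mathbb{K}_{\bar T'}$ taking values in $\Lo{2}{1}\times\Lo{2}{2}$. Invoking Definition \ref{DefLambdaQT} at time $\bar T'$ with this datum, I obtain a step $\delta\in\pr{0,\varepsilon}$ (the bound $\delta\le\varepsilon$ being exactly what guarantees the $\varepsilon$-delay requirement once we set $\tau\equiv\bar T'$ on the step), a control $u$, and a target $\pr{\theta_1,\theta_2}\in\mathbb{K}_{\bar T'+\delta}$ for which the Euler scheme \eqref{wq8} started at the current node obeys $\E{\normi{\xh(\bar T'+\delta)-\theta_1}{-1}{1}^2+\normi{\yh(\bar T'+\delta)-\theta_2}{-1}{2}^2}\le\varepsilon\delta^2$.

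On the step $[\bar T',\bar T'+\delta]$ I would let $\pr{\mathcal{X},\mathcal{Y}}$ solve \eqref{EqXYCal} with $\tau\equiv\bar T'$, with $f_i$ evaluated at the frozen node, and with a correction $\pr{\phi_1,\phi_2}$ whose sole purpose is to make the continuous trajectory land exactly at the admissible target $\pr{\theta_1,\theta_2}\in K$, which then becomes the next node. Conditions (1)--(3) and (5) of Definition \ref{DefEpsSol} are immediate from the construction; condition (6) follows because the oscillation $\E{\normi{\mathcal{X}(\bar T')-\mathcal{X}(s)}{-1}{1}^2}$ across the step is dominated, via the right-continuity estimate of Proposition \ref{prop1} (in the $\Lo{2}{1}$-form underlying Proposition \ref{PropPropCal}), by a quantity of the form $C\pr{s-\bar T'}^{3/2}+\varepsilon\delta^2$, hence $\le\varepsilon$ after shrinking $\delta$. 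The heart of the matter is condition (4): the correction must be charged only $\E{\int\pr{\normi{\phi_1}{-1}{1}^2+\normi{\phi_2}{-1}{2}^2}}\le\varepsilon\delta$ on the step. Here the tangency defect, of size $\varepsilon\delta^2$, supplies precisely a correction of squared $\Hmo{1}$-norm integrating to $\varepsilon\delta$, while the remaining discrepancy --- generated by the difference $\Delta\beta_1\pr{\Gamma_1(t,s)\xh}-\Delta\beta_1\pr{\Gamma_1(t,s)\mathcal{X}}$ --- has to be absorbed using the monotonicity of $\beta_i$ exactly as in the estimate \eqref{ie11}.

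To reach the full horizon I would order the family $\mathcal{S}$ of constrained $\varepsilon$-approximate solutions issued from $\pr{\xi,\eta}$ by prolongation and take the right endpoint $\bar T'\le T$ as increasing functional. The hypotheses of the ordering principle reduce to showing that every prolongation-increasing sequence has an upper bound in $\mathcal{S}$: the concatenation lives on $[t,\bar T'_\infty)$, the continuity of $\pr{\mathcal{X},\mathcal{Y}}$ as an $\mathbb{L}^2\pr{\Omega,\mathcal{F},\mathbb{P};\Hmo{1}\times\Hmo{2}}$-valued curve from Proposition \ref{PropPropCal}(2) provides a limit at $\bar T'_\infty$, and the closedness of $K$ together with the uniform bound \eqref{Estim2cal} transports all six conditions to the limit. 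The principle then produces a solution whose right endpoint cannot be strictly increased; if this endpoint were some $\bar T^{*}<T$, then --- its terminal value lying in $\mathbb{K}_{\bar T^{*}}\cap\pr{\Lo{2}{1}\times\Lo{2}{2}}$ and $\bar T^{*}<T$ --- the local step above would yield a strict prolongation, a contradiction. Hence $\bar T^{*}=T$ and a solution with $\bar T=T$ exists.

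The step I expect to be the genuine obstacle is the construction and estimation of the correction $\pr{\phi_1,\phi_2}$: enforcing that the \emph{continuous} trajectory sits in the closed, possibly non-convex set $K$ at the new node, while simultaneously keeping its $\Hmo{i}$-budget within $\varepsilon\pr{\bar T-t}$, forces one to balance the $\Hmo{i}$ and $\Lo{2}{i}$ topologies and to lean on the dissipativity of $\beta_i$ precisely as in Proposition \ref{prop1}. The secondary difficulty is the chain-limit step, where one must verify that the accumulated corrections and the oscillation constraint survive the passage to $\bar T'_\infty$; the time-uniform estimate \eqref{Estim2cal} is exactly the tool that prevents their degradation. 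For the remaining routine verifications I would refer, as the authors do, to the corresponding stochastic arguments in \cite{viability}.
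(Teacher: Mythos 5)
Your proposal follows essentially the same route as the paper: a local step in which quasi-tangency at the current node supplies $\delta$, a control, and a defect $p^i$ that is converted into the correction $\phi_i=p^i/\delta$, followed by the Br\'ezis--Browder ordering of approximate solutions by prolongation (with the right endpoint as the increasing functional) and a maximality contradiction to force $\bar T=T$; the paper's Steps 1--3 are precisely this. The one point you single out as the ``genuine obstacle'' --- making the continuous trajectory land exactly on the target in $K$ while keeping the $\Hmo{i}$-budget --- is treated in the paper by the same device and with no more detail than you give, so your account is faithful to the published argument.
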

				\begin{proof}\\
					\medskip
					\textbf{Step 1.}\\
					One fixes $\varepsilon>0$ and pick $0<\varepsilon'<\varepsilon$ (to be made precise at the end of this step).
					The quasi-tangency condition yields the existence of some admissible $u$ (hence the condition 3.  in Definition \ref{DefEpsSol}, $\delta<\varepsilon'$ and
					$
					p^i\in \mathbb{L}^2\pr{\Omega,\mathcal{F}_{t+\delta},\mathbb{P};\pr{\Lo{2}{i}}}$
					such that \begin{align}\label{est1_0}\mathbb{E}\pp{\normi{p^1}{-1}{1}^2+\normi{p^2}{-1}{2}^2}\leq \delta^2\varepsilon',\textnormal{ and } \pr{\XX\pr{t+\delta}+p^1,\YY\pr{t+\delta}+p^2}\in \mathbb{K}_{t+\delta}.
					\end{align}
					One sets $\phi_i(s)=\frac{1}{\delta}p^i$ to get 
					$\mathbb{E}\pp{\int_t^{t+\delta}\pr{\normi{\phi_1(l)}{-1}{1}^2+\normi{\phi_2(l)}{-1}{2}^2}dl}\leq \delta\varepsilon'$, then $\tau(s)=t, \ \forall s\in\pp{t,t+\delta}$ such that $\pr{\mathcal{X}(s),\mathcal{Y}(s)}=\pr{\xi,\eta}\in K$, $\mathbb{P}$-a.s.  The second assertion in \eqref{est1_0} and the construction of $\phi$ and $\psi$ guarantee that $\pr{\mathcal{X}(t+\delta),\mathcal{Y}(t+\delta)}\in\mathbb{K}_{t+\delta}$.  Finally, owing to \eqref{Estim2cal}, one has
					\begin{align*}
					&\mathbb{E}\pp{\normi{\mathcal{X}(s)-\mathcal{X}(\tau(s))}{-1}{1}^2+\normi{\mathcal{Y}(s)-\mathcal{Y}(\tau(s))}{-1}{2}^2}\\[4pt]&\leq C\pr{1+\mathbb{E}\pp{\norm{\xi}_{\Lo{2}{1}}^2+\norm{\eta}_{\Lo{2}{1}}^2+\int_t^{t+\delta}\pr{\normi{\phi_1(l)}{-1}{1}^2+\normi{\phi_2(l)}{-1}{2}^2}dl}}\delta\\[4pt]
					&\leq C\pr{1+\mathbb{E}\pp{\norm{\xi}_{\Lo{2}{1}}^2+\norm{\eta}_{\Lo{2}{1}}^2}+\varepsilon'}\delta\leq C\pr{1+\mathbb{E}\pp{\norm{\xi}_{\Lo{2}{1}}^2+\norm{\eta}_{\Lo{2}{1}}^2}}\varepsilon'\leq \varepsilon,
					\end{align*}
					explaining the judicious choice of $\varepsilon'$.\\

					\textbf{Step 2.}\\ 
					The family of $\varepsilon$-approximate solutions is non-empty, denoted by $\mathcal{A}$ and endowed with a partial order relation 
					$\pr{\bar T^1,\tau^1,u^1,\phi_1^1,\phi_2^1,\pr{\mathcal{X}^1,\mathcal{Y}^1}}\precsim\pr{\bar T^2,\tau^2,u^2,\phi_1^2,\phi_2^2,\pr{\mathcal{X}^2,\mathcal{Y}^2}}$, if $t\leq\bar T^1\leq\bar T^2$, $u^1=u^2, \ \tau^1=\tau^2,\ \phi_i^1=\phi_i^2,\ \psi_i^1=\psi_i^2$ on $\pp{t, \bar T^1}\times\Omega$. It is clear that every increasing sequence (indexed by a superscript $n\geq 1$) in $\mathcal{A}$ has a maximum in $\mathcal{A}$. \\
					To see this, one naturally defines $\bar T:=\underset{n\geq 1}{\sup}\bar T^n$, and extends $$
					\tau(s):=\begin{cases}\tau^n(s),&\textnormal{ if }s\in\pp{t,\bar T^n};\\
					\sup_{n\geq 1}\tau^n\pr{\bar T^n},&\textnormal{ if }s=\bar T,\end{cases} 
					$$ keeping the properties in 2. in Definition \ref{DefEpsSol}.\\
					Similarly, one extends $u^n$ by picking some $u_0\in U$ and setting $u(\bar{T})=u_0$. The error terms $\phi_1^1,\phi_2^1$ are extended by setting them to $0$ at $s=\bar T$.  The inequality in item 4. in Definition \ref{DefEpsSol} are guaranteed by Fatou's lemma. The fact that $\pr{\mathcal{X},\mathcal{Y}}$ extends $\pr{\mathcal{X}^n,\mathcal{Y}^n}$ is a mere consequence of the uniqueness in \eqref{EqXYCal}.  The continuity of $\mathcal{X},\mathcal{Y}$ (see last item in Proposition \ref{PropPropCal}) and the convergence $\underset{n\rightarrow\infty}{\lim}\tau\pr{\bar T^n}=\tau(\bar T)$ and $\underset{n\rightarrow\infty}{\lim}\bar T^n=\bar T$, together with the closedness of $K$ show that $K\ni\pr{\tilde{X}\pr{\bar T^n},\tilde{Y}\pr{\bar T^n}}$ and $K\ni\pr{\tilde{X}\pr{\tau\pr{\bar T^n}},\tilde{Y}\pr{\tau\pr{\bar T^n}}}$. The same continuity in $\mathbb{L}^2\pr{\Omega, \mathcal{F},\mathbb{P};\Hmo{1}\times\Hmo{2}}$ allows one to pass to the limit as $n\rightarrow\infty$ in the upper-estimate 
					\begin{equation*}
					\mathbb{E}\pp{\normi{\mathcal{X}(\tau^n(s))-\mathcal{X}(s)}{-1}{1}^2+\normi{\mathcal{Y}(\tau^n(s))-\mathcal{Y}(s)}{-1}{2}^2}\leq \varepsilon,
					\end{equation*} to complete the proof. \\
					
					\textbf{Step 3.}\\
					We introduce the function $\n{\cdot}:\mathcal{A}\rightarrow\mathbb{R}_+$ defined by $\n{\pr{\bar T,\tau,u,\pr{\phi_1,\phi_2},\pr{\psi_1,\psi_2},\pr{\mathcal{X},\mathcal{Y}}}}:=\bar T.$ One then concludes owing to Brézis-Browder Theorem (e.g.  \cite[Theorem 2.1.1]{16}) to deduce the existence of a $\n{\cdot}$-maximal element of $\mathcal{A}$. Should this maximal element be given for $\bar{T}^*<\tilde{T}$,  one applies again the first step, thus arriving to a contradiction.
			\end{proof} }

			\section*{Acknowledgement} {{I.C. was partially supported by the Normandie Regional Council (via the M2SiNum project) and by the French ANR grant ANR-18-CE46-0013 QUTE-HPC. }}D.G.  acknowledges support from the National Key R and D Program of China (No. 2018YFA0703900) and the NSF of P.R. China (No. 12031009). I.M. would like to thank the colleagues at the LMI, Normandie University, INSA de Rouen Normandie, for a pleasant stay at their department where part of this work was done.

		\end{document}